\documentclass[numbers=enddot,12pt,final,onecolumn,notitlepage]{scrartcl}%
\usepackage[headsepline,footsepline,manualmark]{scrlayer-scrpage}
\usepackage[all,cmtip]{xy}
\usepackage{amssymb}
\usepackage{amsmath}
\usepackage{amsthm}
\usepackage{framed}
\usepackage{comment}
\usepackage{color}
\usepackage[breaklinks=True]{hyperref}
\usepackage[sc]{mathpazo}
\usepackage[T1]{fontenc}
\usepackage{tikz}
\usepackage{needspace}
\usepackage{tabls}
\providecommand{\U}[1]{\protect\rule{.1in}{.1in}}
\usetikzlibrary{arrows}
\newcounter{exer}
\theoremstyle{definition}
\newtheorem{theo}{Theorem}[section]
\newenvironment{theorem}[1][]
{\begin{theo}[#1]\begin{leftbar}}
{\end{leftbar}\end{theo}}
\newtheorem{lem}[theo]{Lemma}
\newenvironment{lemma}[1][]
{\begin{lem}[#1]\begin{leftbar}}
{\end{leftbar}\end{lem}}
\newtheorem{prop}[theo]{Proposition}
\newenvironment{proposition}[1][]
{\begin{prop}[#1]\begin{leftbar}}
{\end{leftbar}\end{prop}}
\newtheorem{defi}[theo]{Definition}
\newenvironment{definition}[1][]
{\begin{defi}[#1]\begin{leftbar}}
{\end{leftbar}\end{defi}}
\newtheorem{remk}[theo]{Remark}
\newenvironment{remark}[1][]
{\begin{remk}[#1]\begin{leftbar}}
{\end{leftbar}\end{remk}}
\newtheorem{coro}[theo]{Corollary}
\newenvironment{corollary}[1][]
{\begin{coro}[#1]\begin{leftbar}}
{\end{leftbar}\end{coro}}
\newtheorem{conv}[theo]{Convention}
\newenvironment{convention}[1][]
{\begin{conv}[#1]\begin{leftbar}}
{\end{leftbar}\end{conv}}
\newtheorem{quest}[theo]{Question}
\newenvironment{question}[1][]
{\begin{quest}[#1]\begin{leftbar}}
{\end{leftbar}\end{quest}}
\newtheorem{warn}[theo]{Warning}

\newtheorem{conj}[theo]{Conjecture}

\newtheorem{exam}[theo]{Example}
\newenvironment{example}[1][]
{\begin{exam}[#1]\begin{leftbar}}
{\end{leftbar}\end{exam}}
\newtheorem{exmp}[exer]{Exercise}

\newenvironment{statement}{\begin{quote}}{\end{quote}}

\let\sumnonlimits\sum
\let\prodnonlimits\prod
\let\cupnonlimits\bigcup
\let\capnonlimits\bigcap
\renewcommand{\sum}{\sumnonlimits\limits}
\renewcommand{\prod}{\prodnonlimits\limits}
\renewcommand{\bigcup}{\cupnonlimits\limits}
\renewcommand{\bigcap}{\capnonlimits\limits}
\setlength\tablinesep{3pt}
\setlength\arraylinesep{3pt}
\setlength\extrarulesep{3pt}
\setlength\textheight{22.5cm}
\setlength\textwidth{14.8cm}

\excludecomment{verlong}
\includecomment{vershort}
\excludecomment{noncompile}

\newcommand{\timesu}{\mathbin{\underline{\times}}}

\renewcommand{\leq}{\leqslant}
\renewcommand{\geq}{\geqslant}

\newtheoremstyle{plainsl}
{8pt plus 2pt minus 4pt}
{8pt plus 2pt minus 4pt}
{\slshape}
{0pt}
{\bfseries}
{.}
{5pt plus 1pt minus 1pt}
{}
\theoremstyle{plainsl}
\ihead{Rank of Hankel matrices over finite fields}
\ohead{page \thepage}
\cfoot{}
\begin{document}

\title{On the rank of Hankel matrices over finite fields}
\author{Omesh Dhar Dwivedi, Darij Grinberg}
\date{September 11, 2021}
\maketitle

Given three nonnegative integers $p,q,r$ and a finite field $F$, how many
Hankel matrices $\left(  x_{i+j}\right)  _{0\leq i\leq p,\ 0\leq j\leq q}$
over $F$ have rank $\leq r$ ? This question is classical, and the answer
($q^{2r}$ when $r\leq\min\left\{  p,q\right\}  $) has been obtained
independently by various authors using different tools (\cite[Theorem 1 for
$m=n$]{Daykin}, \cite[(26)]{Elkies}, \cite[Theorem 5.1]{GaGhRa}). In this
note, we will study a refinement of this result: We will show that if we fix
the first $k$ of the entries $x_{0},x_{1},\ldots,x_{k-1}$ for some $k\leq
r\leq\min\left\{  p,q\right\}  $, then the number of ways to choose the
remaining $p+q-k+1$ entries $x_{k},x_{k+1},\ldots,x_{p+q}$ such that the
resulting Hankel matrix $\left(  x_{i+j}\right)  _{0\leq i\leq p,\ 0\leq j\leq
q}$ has rank $\leq r$ is $q^{2r-k}$. This is exactly the answer that one would
expect if the first $k$ entries had no effect on the rank, but of course the
situation is not this simple (and we had to combine some ideas from
\cite[(26)]{Elkies} and from \cite[Theorem 5.1 for $r=n$]{GaGhRa} to obtain
our proof). The refined result generalizes (and provides an alternative proof
of) \cite[Corollary 6.4]{Anzis-etc}.

\subsubsection*{Acknowledgments}

This note owes much to a short conversation between Peter Scholze and the
second author about a more conceptual reason for \cite[Corollary
6.4]{Anzis-etc} to be true. Peter, while having bigger fish to fry, quickly
ventured a guess, predicting that the known enumerative results for Hankel
matrices could be refined by fixing the first few entries. Despite his major
contribution, Peter declined to be a coauthor.

The second author is also grateful to the Mathematisches Forschungsinstitut
Oberwolfach, which hosted him as a Leibniz Fellow in 2020.

We also thank Jonah Blasiak for helpful and enlightening conversations.

\section{\label{sec.results}Results}

We let $\mathbb{N}$ denote the set $\left\{  0,1,2,\ldots\right\}  $.

Fix a field $F$. For any $n\in\mathbb{N}$, any $\left(  n+1\right)  $-tuple
$x=\left(  x_{0},x_{1},\ldots,x_{n}\right)  \in F^{n+1}$, and any two integers
$p,q\in\left\{  -1,0,1,\ldots\right\}  $ satisfying $p+q\leq n$, we define a
$\left(  p+1\right)  \times\left(  q+1\right)  $-matrix $H_{p,q}\left(
x\right)  $ by%
\[
H_{p,q}\left(  x\right)  :=\left(  x_{i+j}\right)  _{0\leq i\leq p,\ 0\leq
j\leq q}=\left(
\begin{array}
[c]{cccc}%
x_{0} & x_{1} & \cdots & x_{q}\\
x_{1} & x_{2} & \cdots & x_{q+1}\\
\vdots & \vdots & \ddots & \vdots\\
x_{p} & x_{p+1} & \cdots & x_{p+q}%
\end{array}
\right)  \in F^{\left(  p+1\right)  \times\left(  q+1\right)  }.
\]
Such a matrix $H_{p,q}\left(  x\right)  $ is called a \emph{Hankel matrix}.
The study of Hankel matrices has a long history in linear algebra (see, e.g.,
\cite{Iohvid82}) and relates to linearly recurrent sequences (\cite{Elkies},
\cite[\S 8.6]{LidNie}), coprime polynomials (\cite{GaGhRa}), determinants
(\cite[Section XII.II]{Muir}), orthogonal polynomials and continued fractions
(\cite[\S 2.7]{Krattenthaler}), total positivity (\cite{Khare}), and various
applications such as x-ray imaging (\cite[\S V.5]{Natterer}).\footnote{Some of
these references are studying \emph{Toeplitz matrices} instead of Hankel
matrices. However, this is equivalent, since a Toeplitz matrix is just a
Hankel matrix turned upside down (i.e., the result of reversing the order of
the rows in a Hankel matrix).} Numerous results have been obtained about their
ranks in particular (\cite[\S 11]{Iohvid82}). When the field $F$ is finite, a
strikingly simple formula can be given for the number of Hankel matrices of a
given rank (more precisely, of rank $\leq$ to a given number):

\begin{theorem}
\label{thm.hankel.mn<=r}Assume that $F$ is finite. Let $q=\left\vert
F\right\vert $. Let $r,m,n\in\mathbb{N}$ satisfy $r\leq m$ and $r\leq n$. The
number of $\left(  m+n+1\right)  $-tuples $x\in F^{m+n+1}$ satisfying
$\operatorname*{rank}\left(  H_{m,n}\left(  x\right)  \right)  \leq r$ is
$q^{2r}$.
\end{theorem}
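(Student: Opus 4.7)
The plan is to set up an essentially bijective parametrization of rank-$\leq r$ Hankel matrices by pairs consisting of ``initial data'' and ``recurrence data'' living in $F^{2r}$, in the spirit of Elkies. The underlying idea is that, generically, a tuple $x \in F^{m+n+1}$ with $\operatorname{rank} H_{m,n}(x) = r$ satisfies a unique linear recurrence $x_{k+r} = \alpha_{r-1} x_{k+r-1} + \cdots + \alpha_0 x_k$ of order $r$, valid for $k = 0, 1, \ldots, m+n-r$ (here $r \leq m$ provides enough rows for the recurrence to be uniquely determined, and $r \leq n$ provides enough columns for it to hold across the full range). Such a tuple is then completely specified by its initial values $(x_0, \ldots, x_{r-1}) \in F^r$ together with the recurrence coefficients $(\alpha_0, \ldots, \alpha_{r-1}) \in F^r$, predicting the count $q^{2r}$.

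To make this rigorous, I would count the pairs $(Q, x)$ where $Q \in F[t]$ is monic of degree $s \leq r$ and $x$ is annihilated by $Q$, in the sense that the recurrence with characteristic polynomial $Q$ holds across the full index range. For each such $Q$, the valid $x$'s are parametrized freely by the initial segment $(x_0, \ldots, x_{s-1})$, giving $q^s$ tuples; summing over the $q^s$ monic $Q$ of degree $s$ yields $q^{2s}$ pairs at level $s$. A M\"obius-style inversion over the divisibility poset of $Q$'s --- using that a tuple with minimal annihilator of degree $k$ is annihilated by exactly $q^{s-k}$ monic polynomials of degree $s$ (the multiples of its minimal annihilator) --- extracts the count of tuples stratified by the degree of their minimal annihilator, and the stratum sizes should sum to $q^{2r}$ by a telescoping identity.

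The main obstacle is that this parametrization fails to capture certain ``degenerate'' rank-$\leq r$ Hankel matrices. For example, $x = (0, \ldots, 0, c)$ with $c \neq 0$ has $\operatorname{rank} H_{m,n}(x) \leq 1$ but satisfies no nontrivial order-$1$ recurrence starting from index $0$. Such tuples must be folded into the count via a more flexible framework; the natural candidate is that of Pad\'e approximants $(P, Q)$ with $\deg Q \leq r$ and $Q(t) X(t) \equiv P(t) \pmod{t^{m+n+1}}$, where $X(t) := \sum_{i} x_i t^i$, in place of literal annihilators. This is essentially the framework behind Ganguly--Ghorpade--Raizada. Once the inclusion-exclusion over coprime Pad\'e pairs is carried out, the count should reassemble to exactly $q^{2r}$. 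The delicacy of this bookkeeping --- ensuring that each rank-$\leq r$ tuple is represented exactly once across the generic and degenerate strata --- is the reason why (as the introduction notes) the proof combines ideas from both Elkies and Ganguly--Ghorpade--Raizada, rather than following either approach in isolation.
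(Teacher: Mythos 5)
Your outline identifies the right generic picture and even the right obstruction, but it stops short of a proof precisely where the real work lies, and the one concrete counting claim you do make in the degenerate analysis is false. You assert that a tuple whose minimal annihilator has degree $k$ is annihilated by exactly $q^{s-k}$ monic polynomials of degree $s$, namely the multiples of the minimal annihilator. For \emph{finite} sequences this breaks down: take $x=(0,0,1)$ with $m=n=1$. This $x$ has no monic annihilator of degree $1$ or $2$ (each would force $x_2=0$), yet every monic polynomial of degree $3$ annihilates it vacuously, since there are no constraints left to check. The set of annihilators of a finite sequence is not the monic part of a principal ideal, there need not be a well-defined ``minimal annihilator,'' and the counts of annihilators by degree do not follow the $q^{s-k}$ pattern. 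So the M\"obius inversion over the divisibility poset, as you describe it, does not go through; and the subsequent appeal to ``coprime Pad\'e pairs'' and a ``telescoping identity'' that ``should reassemble to exactly $q^{2r}$'' is exactly the part of the argument that needs to be done and is not done. Two further steps are also only asserted: the equivalence between $\operatorname*{rank}\left(H_{m,n}\left(x\right)\right)\leq r$ and the existence of an annihilating recurrence or Pad\'e pair of the appropriate type (a nontrivial Kronecker-type fact about Hankel ranks), and the injectivity/exhaustiveness of the parametrization by initial data plus recurrence coefficients even in the generic stratum.

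For contrast, the paper sidesteps the stratification by minimal annihilators entirely. Its key identity (Proposition \ref{prop.elkies-prop2}) is pointwise in $x$: it writes $\left(q-1\right)\cdot\left[\operatorname*{rank}\left(H_{m,n}\left(x\right)\right)\leq m\right]$ as the difference between the number of nonzero left-kernel vectors of $H_{m,n}\left(x\right)$ and $q$ times the number of nonzero left-kernel vectors of $H_{m-1,n+1}\left(x\right)$; summing over $x$ and swapping the order of summation then reduces everything to counting, for a \emph{fixed} row vector $v$, the tuples $x$ it annihilates (Lemmas \ref{lem.lastnon0} and \ref{lem.last0}, split according to whether the last entry of $v$ vanishes). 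The telescoping happens between the two adjacent Hankel shapes rather than over degrees of annihilators, which is what makes the degenerate tuples harmless. The reduction from general $r$ to $r=m$ is then handled separately by the rank lemmas of Section \ref{sec.hank-ranks}. If you want to salvage your plan, you would essentially have to reinvent this mechanism (or Elkies' equivalent of it); as written, the proposal is a program, not a proof.
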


\begin{example}
For a simple example, let $r=1$ and $m=2$ and $n=3$. Thus, for every
$x=\left(  x_{0},x_{1},x_{2},x_{3},x_{4},x_{5}\right)  \in F^{6}$, we have%
\[
H_{m,n}\left(  x\right)  =H_{2,3}\left(  x\right)  =\left(
\begin{array}
[c]{cccc}%
x_{0} & x_{1} & x_{2} & x_{3}\\
x_{1} & x_{2} & x_{3} & x_{4}\\
x_{2} & x_{3} & x_{4} & x_{5}%
\end{array}
\right)  .
\]
Theorem \ref{thm.hankel.mn<=r} yields that the number of $6$-tuples $x\in
F^{6}$ satisfying $\operatorname*{rank}\left(  H_{2,3}\left(  x\right)
\right)  \leq1$ is $q^{2\cdot1}=q^{2}$. These $6$-tuples can indeed be
described explicitly:

\begin{itemize}
\item Any $6$-tuple of the form $\left(  u,uv,uv^{2},uv^{3},uv^{4}%
,uv^{5}\right)  $ with $u\in F\setminus\left\{  0\right\}  $ and $v\in F$ is
such a $6$-tuple $x$. This gives a total of $\left\vert F\setminus\left\{
0\right\}  \right\vert \cdot\left\vert F\right\vert =\left(  q-1\right)  q$
many such $6$-tuples.

\item Any $6$-tuple of the form $\left(  0,0,0,0,0,w\right)  $ with $w\in F$
is such a $6$-tuple $x$. This gives a total of $\left\vert F\right\vert =q$
many such $6$-tuples.
\end{itemize}

For higher values of $r$, it is harder to describe all the $q^{2r}$ pertinent tuples.
\end{example}

To our knowledge, Theorem \ref{thm.hankel.mn<=r} has not appeared in this
exact form in the literature; however, it is easily seen to be equivalent to
the following variant, which has appeared in \cite[Theorem 1]{Daykin}:

\begin{corollary}
\label{cor.hankel.mn=r}Assume that $F$ is finite. Let $q=\left\vert
F\right\vert $. Let $r,m,n\in\mathbb{N}$ satisfy $m\leq n$. The number of
$\left(  m+n+1\right)  $-tuples $x\in F^{m+n+1}$ satisfying
$\operatorname*{rank}\left(  H_{m,n}\left(  x\right)  \right)  =r$ is
\[%
\begin{cases}
1, & \text{if }r=0;\\
q^{2r-2}\left(  q^{2}-1\right)  , & \text{if }0<r\leq m;\\
q^{2r-2}\left(  q^{n-m+1}-1\right)  , & \text{if }r=m+1;\\
0, & \text{if }r>m+1.
\end{cases}
\]

\end{corollary}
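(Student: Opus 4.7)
The plan is to derive Corollary \ref{cor.hankel.mn=r} from Theorem \ref{thm.hankel.mn<=r} by a routine subtraction, using the telescoping identity
\[
\#\set{x \in F^{m+n+1} \ :\ \operatorname{rank}\tup{H_{m,n}\tup{x}} = r}
= N_{\leq r} - N_{\leq r-1},
\]
where $N_{\leq s}$ denotes the number of tuples $x \in F^{m+n+1}$ with $\operatorname{rank}\tup{H_{m,n}\tup{x}} \leq s$. I would split the argument into the four cases displayed in the statement.

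For $r = 0$, rank $0$ forces the matrix to be the zero matrix, so every entry $x_i$ must vanish, giving the unique tuple $x = (0,0,\ldots,0)$; hence the count is $1$. For $0 < r \leq m$, both $r$ and $r-1$ satisfy the hypotheses of Theorem \ref{thm.hankel.mn<=r} (since $m \leq n$, the condition $r \leq \min\set{m,n}$ reduces to $r \leq m$), so $N_{\leq r} = q^{2r}$ and $N_{\leq r-1} = q^{2r-2}$; subtracting yields $q^{2r} - q^{2r-2} = q^{2r-2}\tup{q^{2}-1}$.

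The boundary case $r = m+1$ is the one that needs a little extra care, since Theorem \ref{thm.hankel.mn<=r} no longer applies to $N_{\leq r}$. Here I would observe that $H_{m,n}\tup{x}$ has only $m+1$ rows, so its rank is automatically at most $m+1$; therefore $N_{\leq m+1}$ equals the total number of tuples, namely $q^{m+n+1}$. Combined with $N_{\leq m} = q^{2m}$ from the theorem, this gives $q^{m+n+1} - q^{2m} = q^{2m}\tup{q^{n-m+1}-1} = q^{2r-2}\tup{q^{n-m+1}-1}$, as required. Finally, for $r > m+1$ the count is $0$ since the rank cannot exceed the number of rows.

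There is no genuine obstacle here; the only thing that could trip one up is to mistakenly invoke Theorem \ref{thm.hankel.mn<=r} at $r = m+1$, which is why I would handle that row of the piecewise formula separately by counting the complement in $F^{m+n+1}$ rather than applying the theorem out of range.
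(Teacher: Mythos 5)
Your proposal is correct and follows essentially the same route as the paper: the paper's proof also treats the four cases separately, handles $r=0$ by noting that only the zero tuple gives the zero matrix, obtains the $0<r\leq m$ case by subtracting $q^{2(r-1)}$ from $q^{2r}$ via Theorem \ref{thm.hankel.mn<=r}, and handles $r=m+1$ exactly as you do, by subtracting $q^{2m}$ from the total count $q^{m+n+1}$ rather than invoking the theorem out of range. No discrepancies to report.
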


The particular case of Corollary \ref{cor.hankel.mn=r} for $m=n$ also appears
in \cite[Theorem 5.1]{GaGhRa}\footnote{Note that \cite[Theorem 5.1]{GaGhRa}
works with Toeplitz matrices instead of Hankel matrices, but this makes no
real difference, since a Toeplitz matrix is just a Hankel matrix turned upside
down (and this operation clearly does not change the rank of the matrix).} and
\cite[(26)]{Elkies}. The particular case when $r=m=n$ appears in
\cite[Corollary 3]{KalLob} as well.

Another setting in which Hankel matrices appear is the theory of symmetric
functions, specifically Schur functions (see, e.g., \cite[Chapter
7]{Stanley-EC2}). While we will not use this setting to prove our main
results, it has provided the main inspiration for this note, so we shall
briefly recall it now. The Jacobi--Trudi formula \cite[Theorem 7.16.1]%
{Stanley-EC2} expresses a Schur function $s_{\lambda}$ as the determinant of a
matrix, which is a Hankel matrix when the partition $\lambda$ is
rectangle-shaped. The recent result \cite[Corollary 6.4]{Anzis-etc} by Anzis,
Chen, Gao, Kim, Li and Patrias can thus be framed as a formula for the
probability of a certain $\left(  n+1\right)  \times\left(  n+1\right)  $
Hankel matrix over a finite field to have determinant $0$ (that is, rank $\leq
n$). This would be a particular case of Theorem \ref{thm.hankel.mn<=r} if not
for the fact that the entries of the relevant Hankel matrix are not chosen
uniformly at random; instead, the first few of them are fixed, while the rest
are chosen uniformly at random\footnote{See Section \ref{sec.jt} for concrete
examples of such matrices.}. This suggests a generalization of Theorem
\ref{thm.hankel.mn<=r} in which the first few entries\footnote{Specifically,
\textquotedblleft first few\textquotedblright\ means \textquotedblleft at most
$m$\textquotedblright.} of the $\left(  m+n+1\right)  $-tuples $x\in
F^{m+n+1}$ are fixed. The existence of such a generalization was suggested to
us by Peter Scholze.

This generalization indeed exists, and will be the main result of this note.
In stating it, we will use the following notation:

\begin{definition}
Let $n\in\mathbb{N}$. Let $x=\left(  x_{0},x_{1},\ldots,x_{n}\right)  $ be any
$\left(  n+1\right)  $-tuple of any kinds of objects. Let $i\in\left\{
0,1,\ldots,n+1\right\}  $. Then, $x_{\left[  0,i\right)  }$ denotes the
$i$-tuple $\left(  x_{0},x_{1},\ldots,x_{i-1}\right)  $.
\end{definition}

For instance, $\left(  a,b,c,d,e\right)  _{\left[  0,3\right)  }=\left(
a,b,c\right)  $.

We can now state our generalization of Theorem \ref{thm.hankel.mn<=r}:

\begin{theorem}
\label{thm.hankel.mn<=rk}Assume that $F$ is finite. Let $q=\left\vert
F\right\vert $. Let $k,r,m,n\in\mathbb{N}$ satisfy $k\leq r\leq m$ and $r\leq
n$. Fix any $k$-tuple $a=\left(  a_{0},a_{1},\ldots,a_{k-1}\right)  \in F^{k}%
$. The number of $\left(  m+n+1\right)  $-tuples $x\in F^{m+n+1}$ satisfying
$x_{\left[  0,k\right)  }=a$ and $\operatorname*{rank}\left(  H_{m,n}\left(
x\right)  \right)  \leq r$ is $q^{2r-k}$.
\end{theorem}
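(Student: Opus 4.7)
The plan is to prove the theorem by induction on $k$, with the base case $k=0$ being Theorem~\ref{thm.hankel.mn<=r}. For the inductive step, define
\[
N_j(b) := \abs{\set{x \in F^{m+n+1} \,:\, x_{[0,j)} = b,\ \operatorname{rank}(H_{m,n}(x)) \le r}}
\]
for any $j \in \set{0, 1, \ldots, k}$ and $b \in F^j$. Setting $a' := (a_0, \ldots, a_{k-2}) \in F^{k-1}$, the inductive hypothesis gives $N_{k-1}(a') = q^{2r-k+1}$. Writing $(a', c)$ for the concatenation $(a_0, \ldots, a_{k-2}, c) \in F^k$ and decomposing by the value of $x_{k-1}$, we have $N_{k-1}(a') = \sum_{c \in F} N_k(a', c)$. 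It therefore suffices to show that $N_k(a', c)$ is independent of $c$; equidistribution then forces each summand to equal $q^{2r-k}$.

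The equidistribution over $c$ is the heart of the matter. Intuitively, since $k - 1 < r \le \min\set{m, n}$, the entry $x_{k-1}$ is one of the free initial conditions of any linear recurrence of order at most $r$ satisfied by a rank-$\le r$ sequence $x$. Propagating the same recurrence from a modified initial condition at position $k-1$ should yield a bijection between the set of rank-$\le r$ completions of $(a', c)$ and that of $(a', c')$, for any $c, c' \in F$.

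Making this heuristic rigorous is where the techniques of \cite[(26)]{Elkies} and \cite[Theorem 5.1]{GaGhRa} come in. The plan is to parametrize rank-$\le r$ Hankel sequences $x \in F^{m+n+1}$ by pairs $(P, Q) \in F[t]^2$ with $\deg Q \le r$, where $P/Q$ encodes the generating function via $Q(t) \cdot \sum_{i=0}^{m+n} x_i t^i \equiv P(t) \pmod{t^{m+n+1}}$, subject to appropriate equivalence and degree constraints. Under this parametrization, the first $k$ entries of $x$ correspond to $k$ specific free parameters in the $(P, Q)$ data (essentially because the first $k$ Taylor coefficients of $P/Q$ are free whenever $\deg Q > k-1$), and a direct count then yields the equidistribution.

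The main obstacle is the degenerate stratum, where $x$ satisfies a recurrence of order strictly less than $r$ (equivalently, $\gcd(P, Q) \ne 1$). Here the pair $(P, Q)$ is not unique, so naive counting overcounts. The cleanup requires the coprime-polynomial machinery of Gao--Ghorpade--Ram, and it is precisely the combination of this with Elkies' generating-function framework that allows a uniform verification of the equidistribution across both generic and degenerate loci.
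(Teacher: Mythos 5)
Your proposal is a plan rather than a proof: the reduction of the inductive step to the claim that $N_k(a',c)$ is independent of $c\in F$ is fine (and, given the base case $k=0$ from the literature, that claim is indeed equivalent to the theorem), but the claim itself is exactly where all the difficulty lives, and you do not establish it. The heuristic bijection --- ``propagate the same recurrence from a modified initial condition at position $k-1$'' --- fails on the degenerate stratum, as you yourself note. A concrete instance: for $r=1$, $m=2$, $n=3$, $k=1$, the fiber over $x_0=0$ consists of the tuples $\left(0,0,0,0,0,w\right)$, which satisfy no first-order recurrence propagated forward from $x_0$; there is no evident recurrence-transport bijection onto the fiber over $x_0=u\neq 0$, whose elements are the genuinely geometric sequences $\left(u,uv,\ldots,uv^5\right)$. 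Your proposed repair --- parametrizing rank-$\leq r$ tuples by pairs $\left(P,Q\right)$ with $Q\cdot\sum_i x_i t^i\equiv P\pmod{t^{m+n+1}}$ and invoking the coprime-polynomial machinery of Gao--Ghorpade--Ram to handle $\gcd\left(P,Q\right)\neq 1$ --- is named but not carried out. The substantive work (making the parametrization correct for rectangular $H_{m,n}$ rather than the square case treated in those references, dealing with $Q(0)=0$ where $P/Q$ has no Taylor expansion so the ``first $k$ coefficients are free'' argument breaks, identifying which parameters of $\left(P,Q\right)$ correspond to $x_0,\ldots,x_{k-1}$, and counting the non-coprime locus without overcounting) is all missing, and it is not routine.

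For comparison, the paper's route avoids the degenerate stratum entirely and needs no polynomial parametrization. It first uses the rank lemmas (Lemma \ref{lem.hank-ranks.6}) to replace $H_{m,n}\left(x\right)$ by $H_{r,s}\left(x\right)$ with $s=m+n-r$, reducing to the case where the target rank equals the number of rows minus one; it then counts via Elkies' left-kernel identity (Proposition \ref{prop.elkies-prop2}), summed over the tuples with $x_{\left[0,k\right)}=a$ (Lemma \ref{lem.sumlast}), where the two kernel-vector counts (Lemmas \ref{lem.lastnon0} and \ref{lem.last0}) are elementary linear-algebra computations. Note also that the paper derives the $k=0$ case (Theorem \ref{thm.hankel.mn<=r}) \emph{from} the general theorem, so if you use it as your induction base you must import it from the literature rather than from this paper. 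If you want to salvage your approach, the honest path is to prove the equidistribution directly by the paper's method restricted to two values of $c$ --- at which point you will have reproved the theorem and the induction on $k$ becomes superfluous.
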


\begin{example}
For an example, let $k=2$, $r=2$, $m=3$ and $n=3$. Let $a=\left(  a_{0}%
,a_{1}\right)  \in F^{2}$. Then, Theorem \ref{thm.hankel.mn<=rk} yields that
the number of $7$-tuples $x\in F^{7}$ satisfying $x_{\left[  0,2\right)  }=a$
and $\operatorname*{rank}\left(  H_{3,3}\left(  x\right)  \right)  \leq2$ is
$q^{2\cdot2-2}=q^{2}$. Note that a $7$-tuple $x\in F^{7}$ satisfying
$x_{\left[  0,2\right)  }=a$ is nothing but a $7$-tuple $x\in F^{7}$ that
begins with the entries $a_{0}$ and $a_{1}$; thus, we could just as well be
counting the $5$-tuples $\left(  x_{2},x_{3},x_{4},x_{5},x_{6}\right)  \in
F^{5}$ satisfying $\operatorname*{rank}\left(  H_{3,3}\left(  a_{0}%
,a_{1},x_{2},x_{3},x_{4},x_{5},x_{6}\right)  \right)  \leq2$.
\end{example}

Clearly, Theorem \ref{thm.hankel.mn<=r} is the particular case of Theorem
\ref{thm.hankel.mn<=rk} for $k=0$, since the $0$-tuple $a=\left(  {}\right)
\in F^{0}$ automatically satisfies $x_{\left[  0,0\right)  }=a$ for every
$x\in F^{m+n+1}$.

By specializing Theorem \ref{thm.hankel.mn<=r} to the case $r=m=n$ (and
recalling that a square matrix has determinant $0$ if and only if it has
less-than-full rank), we can easily obtain the following:

\begin{corollary}
\label{cor.hankel.detk}Assume that $F$ is finite. Let $q=\left\vert
F\right\vert $. Let $k,n\in\mathbb{N}$ satisfy $k\leq n$. Fix any $k$-tuple
$a=\left(  a_{0},a_{1},\ldots,a_{k-1}\right)  \in F^{k}$. The number of
$\left(  2n+1\right)  $-tuples $x\in F^{2n+1}$ satisfying $x_{\left[
0,k\right)  }=a$ and $\det\left(  H_{n,n}\left(  x\right)  \right)  =0$ is
$q^{2n-k}$.
\end{corollary}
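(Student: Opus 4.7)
The plan is to derive this corollary directly from Theorem~\ref{thm.hankel.mn<=rk} by specializing to the square case $r=m=n$. The matrix $H_{n,n}(x)$ is square of size $(n+1)\times(n+1)$, so the condition $\det\left(H_{n,n}(x)\right)=0$ is equivalent to $\operatorname{rank}\left(H_{n,n}(x)\right)<n+1$, i.e.\ $\operatorname{rank}\left(H_{n,n}(x)\right)\leq n$. Thus the tuples we want to count are exactly those counted by Theorem~\ref{thm.hankel.mn<=rk} under the specialization $r=m=n$.

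The remaining step is just to check the hypotheses. With $r=m=n$, the requirements $k\leq r\leq m$ and $r\leq n$ of Theorem~\ref{thm.hankel.mn<=rk} reduce to $k\leq n$ (assumed), together with $n\leq n$ and $n\leq n$ (trivial). Moreover the fixed prefix $a=(a_0,a_1,\ldots,a_{k-1})\in F^k$ is allowed to be arbitrary, as in the hypothesis of the corollary. Therefore Theorem~\ref{thm.hankel.mn<=rk} applies and yields a count of $q^{2r-k}=q^{2n-k}$, which is exactly the number asserted by the corollary.

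No real obstacle arises here: the argument is a pure specialization combined with the elementary equivalence between singularity and rank deficiency for square matrices. The only small subtlety is to confirm that the inequality constraints in Theorem~\ref{thm.hankel.mn<=rk} are simultaneously sharp in the right way when $r=m=n$, so that the rank bound $\operatorname{rank}\left(H_{n,n}(x)\right)\leq r$ truly matches the determinantal condition $\det\left(H_{n,n}(x)\right)=0$, which it does because $r=n$ is exactly one less than the number of rows (and columns) of $H_{n,n}(x)$.
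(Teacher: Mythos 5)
Your proposal is correct and coincides with the paper's own proof: both reduce the determinant condition to $\operatorname*{rank}\left(  H_{n,n}\left(  x\right)  \right)  \leq n$ via the square-matrix equivalence and then apply Theorem \ref{thm.hankel.mn<=rk} with $r=m=n$, whose hypotheses reduce to the assumed $k\leq n$. Nothing further is needed.
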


We shall prove Theorem \ref{thm.hankel.mn<=rk} in Section
\ref{sec.proofs-main}; we will then derive Theorem \ref{thm.hankel.mn<=r},
Corollary \ref{cor.hankel.mn=r} and Corollary \ref{cor.hankel.detk} from it.
Finally, in Section \ref{sec.jt}, we will explain how Corollary
\ref{cor.hankel.detk} generalizes \cite[Corollary 6.4]{Anzis-etc}.

\begin{remark}
Theorem \ref{thm.hankel.mn<=rk} also holds if we replace the assumptions
\textquotedblleft$k\leq r\leq m$ and $r\leq n$\textquotedblright\ by
\textquotedblleft$k\leq r\leq m\leq n+1$\textquotedblright. In fact, the only
case covered by the latter assumptions but not by the former is the case when
$k\leq r=m=n+1$; however, Theorem \ref{thm.hankel.mn<=rk} is easy to prove
directly in this case. (To wit, if $k\leq r=m=n+1$, then \textbf{every}
$\left(  m+n+1\right)  $-tuple $x\in F^{m+n+1}$ satisfies
$\operatorname*{rank}\left(  H_{m,n}\left(  x\right)  \right)  \leq r$, since
the matrix $H_{m,n}\left(  x\right)  $ has $n+1$ columns and therefore has
rank $\leq n+1=r$. Hence, the number of $\left(  m+n+1\right)  $-tuples $x\in
F^{m+n+1}$ satisfying $x_{\left[  0,k\right)  }=a$ and $\operatorname*{rank}%
\left(  H_{m,n}\left(  x\right)  \right)  \leq r$ equals the number of
\textbf{all} $\left(  m+n+1\right)  $-tuples $x\in F^{m+n+1}$ satisfying
$x_{\left[  0,k\right)  }=a$ in this case. But this number is easily seen to
be $q^{m+n+1-k}=q^{2r-k}$ (since $\underbrace{m}_{=r}+\underbrace{n+1}%
_{=r}=r+r=2r$). Thus, Theorem \ref{thm.hankel.mn<=rk} is proved in the case
when $k\leq r=m=n+1$.)

As a consequence, Theorem \ref{thm.hankel.mn<=r} also holds if we replace the
assumptions \textquotedblleft$r\leq m$ and $r\leq n$\textquotedblright\ by
\textquotedblleft$r\leq m\leq n+1$\textquotedblright. Hence, Corollary
\ref{cor.hankel.mn=r} still holds if we replace the assumption
\textquotedblleft$m\leq n$\textquotedblright\ by \textquotedblleft$m\leq
n+1$\textquotedblright. However, we gain nothing significantly new in this
way, since the newly covered cases can also be easily obtained from the old ones.
\end{remark}

\section{\label{sec.hank-ranks}Rank lemmas}

Before we come to the proof of Theorem \ref{thm.hankel.mn<=rk}, we are going
to build a toolbox of general lemmas about ranks of the Hankel matrices
$H_{p,q}\left(  x\right)  $. We note that none of these lemmas requires $F$ to
be finite; they can equally well be applied to fields like $\mathbb{R}$ and
$\mathbb{C}$.

\begin{lemma}
\label{lem.hank-ranks.1}Let $n\in\mathbb{N}$. Let $p,q\in\mathbb{N}$ be such
that $p+q\leq n+1$. If $x\in F^{n+1}$ satisfies $\operatorname*{rank}\left(
H_{p,q-1}\left(  x\right)  \right)  \leq p$, then%
\[
\operatorname*{rank}\left(  H_{p,q-1}\left(  x\right)  \right)  \leq
\operatorname*{rank}\left(  H_{p-1,q}\left(  x\right)  \right)  .
\]

\end{lemma}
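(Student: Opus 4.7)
The matrices $A := H_{p,q-1}(x)$ and $B := H_{p-1,q}(x)$ both contain the common Hankel submatrix $M := H_{p-1,q-1}(x)$: namely, $A$ is $M$ augmented below by the row $R_p := (x_p, x_{p+1}, \ldots, x_{p+q-1})$, while $B$ is $M$ augmented on the right by the column $c := (x_q, x_{q+1}, \ldots, x_{p+q-1})^T$. Set $r := \operatorname{rank}(A)$. The plan is to case-split on whether $R_p$ lies in the row span of $M$ and whether $c$ lies in the column span of $M$. If $R_p \in \operatorname{rowspan}(M)$, then $\operatorname{rank}(A) = \operatorname{rank}(M) \leq \operatorname{rank}(B)$ (the last inequality since $M$ is a submatrix of $B$). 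Otherwise $\operatorname{rank}(M) = r-1$, and if moreover $c \notin \operatorname{colspan}(M)$, then $\operatorname{rank}(B) = \operatorname{rank}(M) + 1 = r$. In both situations the conclusion follows immediately.

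It remains to rule out the combination $R_p \notin \operatorname{rowspan}(M)$ and $c \in \operatorname{colspan}(M)$; this is where the hypothesis $r \leq p$ will enter. To treat the two sides uniformly, I will pass to a polynomial encoding: define the linear functional $L : F[y]_{\leq p+q-1} \to F$ by $L(y^k) := x_k$, identify a row vector $(\alpha_0, \ldots, \alpha_d)$ with the polynomial $\alpha(y) = \sum_i \alpha_i y^i$, and observe that the left null space of a Hankel matrix $H_{s,t}(x)$ consists of the $\alpha \in F[y]_{\leq s}$ satisfying $L(\alpha \cdot y^j) = 0$ for $j = 0, 1, \ldots, t$. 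Under this dictionary, $R_p \notin \operatorname{rowspan}(M)$ rephrases as (a): every left-null polynomial of $A$ has degree $\leq p - 1$ (since the presence of a degree-$p$ left-null polynomial would rescale to a monic one expressing $R_p$ via the other rows). The condition $c \in \operatorname{colspan}(M)$ rephrases as (b): every left-null polynomial $\alpha$ of $M$ additionally satisfies $L(\alpha y^q) = 0$ (by orthogonality of $c$ to the left null space of $M$).

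Let $W$ denote the left null space of $M$, a subspace of $F[y]_{\leq p-1}$; since $r \leq p$, $\dim W = p - (r-1) \geq 1$ and hence $W \neq 0$. The key claim is that $W$ is closed under multiplication by $y$. Indeed, given $\alpha \in W$, the conditions $L(\alpha y^j) = 0$ hold for $j = 0, \ldots, q-1$ (from $\alpha$ being $M$-null) and also for $j = q$ (by (b)); therefore $L((y\alpha) \cdot y^j) = L(\alpha y^{j+1}) = 0$ for $j = 0, \ldots, q-1$, making $y\alpha$ a left-null polynomial of $A$. By (a), $y\alpha \in F[y]_{\leq p-1}$, and hence $y\alpha \in W$. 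But no nonzero subspace of $F[y]_{\leq p-1}$ can be closed under multiplication by $y$: choosing $\alpha \in W$ of maximal degree $d$, the element $y\alpha \in W$ would have degree $d + 1 > d$, contradicting maximality. This rules out the last case and completes the proof. The main obstacle is locating this shift-stability, which combines (a) and (b) in one stroke; the polynomial encoding is what makes it visible.
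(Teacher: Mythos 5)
Your proof is correct, but it takes a genuinely different route from the paper's. The paper proves this lemma by induction on $p$: it peels off the first row of $H_{p,q-1}\left(x\right)$, distinguishes whether that row is a linear combination of the remaining ones, and in the nontrivial case passes to the shifted tuple $\overline{x}=\left(x_1,x_2,\ldots,x_n\right)$ so that the induction hypothesis applies to $H_{p-1,q-1}\left(\overline{x}\right)$. You instead give a single non-inductive argument: after the easy cases (last row of $A$ in the row span of $M$, or new column of $B$ outside the column span of $M$), you rule out the remaining configuration by showing that the left kernel $W$ of $M$ would then be a nonzero subspace of $F\left[y\right]_{\leq p-1}$ stable under multiplication by $y$, which is impossible by a maximal-degree argument; the hypothesis $\operatorname{rank}\left(H_{p,q-1}\left(x\right)\right)\leq p$ enters exactly to guarantee $W\neq 0$. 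All the individual steps check out, including the two dictionary translations (degree bound $\leq p-1$ for left-null polynomials of $A$ versus $R_p\notin\operatorname{rowspan}\left(M\right)$, and $L\left(\alpha y^{q}\right)=0$ from $c\in\operatorname{colspan}\left(M\right)$, of which you only need the easy direction $c=Mw\Rightarrow vc=0$). Your shift-stability argument is essentially the linear-recurrence viewpoint of Elkies that the authors cite elsewhere in the paper but do not use for this lemma; it is more conceptual and avoids induction entirely, at the cost of introducing the polynomial encoding, whereas the paper's row-peeling induction is longer but entirely elementary and self-contained.
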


\begin{proof}
[Proof of Lemma \ref{lem.hank-ranks.1}.]We proceed by induction on $p$
(without fixing $x$):

\textit{Induction base:} Proving Lemma \ref{lem.hank-ranks.1} in the case when
$p=0$ is easy: In this case, the assumption $\operatorname*{rank}\left(
H_{p,q-1}\left(  x\right)  \right)  \leq p$ rewrites as $\operatorname*{rank}%
\left(  H_{p,q-1}\left(  x\right)  \right)  \leq0$, which immediately yields
the claim.

\textit{Induction step:} Let $p$ be a positive integer. Assume (as the
induction hypothesis) that Lemma \ref{lem.hank-ranks.1} holds for $p-1$
instead of $p$. Our goal is now to prove Lemma \ref{lem.hank-ranks.1} for $p$.

Let $q\in\mathbb{N}$ be such that $p+q\leq n+1$. Let $x\in F^{n+1}$ satisfy
$\operatorname*{rank}\left(  H_{p,q-1}\left(  x\right)  \right)  \leq p$. We
must thus prove that%
\[
\operatorname*{rank}\left(  H_{p,q-1}\left(  x\right)  \right)  \leq
\operatorname*{rank}\left(  H_{p-1,q}\left(  x\right)  \right)  .
\]

Write the $\left(  n+1\right)  $-tuple $x\in F^{n+1}$ as $x=\left(
x_{0},x_{1},\ldots,x_{n}\right)  $. Then,%
\begin{align*}
H_{p,q-1}\left(  x\right)   &  =\left(
\begin{array}
[c]{cccc}%
x_{0} & x_{1} & \cdots & x_{q-1}\\
x_{1} & x_{2} & \cdots & x_{q}\\
\vdots & \vdots & \ddots & \vdots\\
x_{p} & x_{p+1} & \cdots & x_{p+q-1}%
\end{array}
\right)  \ \ \ \ \ \ \ \ \ \ \text{and}\\
H_{p-1,q}\left(  x\right)   &  =\left(
\begin{array}
[c]{cccc}%
x_{0} & x_{1} & \cdots & x_{q}\\
x_{1} & x_{2} & \cdots & x_{q+1}\\
\vdots & \vdots & \ddots & \vdots\\
x_{p-1} & x_{p} & \cdots & x_{p+q-1}%
\end{array}
\right)  \ \ \ \ \ \ \ \ \ \ \text{and}\\
H_{p-1,q-1}\left(  x\right)   &  =\left(
\begin{array}
[c]{cccc}%
x_{0} & x_{1} & \cdots & x_{q-1}\\
x_{1} & x_{2} & \cdots & x_{q}\\
\vdots & \vdots & \ddots & \vdots\\
x_{p-1} & x_{p} & \cdots & x_{p+q-2}%
\end{array}
\right)  .
\end{align*}
Hence, the matrix $H_{p,q-1}\left(  x\right)  $ is $H_{p-1,q-1}\left(
x\right)  $ with one extra row inserted at the bottom, whereas the matrix
$H_{p-1,q}\left(  x\right)  $ is $H_{p-1,q-1}\left(  x\right)  $ with one
extra column inserted at the right end.

For any matrix $A$ that has at least one row, we let $\overline{A}$ denote the
matrix $A$ with its first row removed. The following properties of
$\overline{A}$ are well-known:

\begin{itemize}
\item If the first row of $A$ is a linear combination of the remaining rows,
then%
\begin{equation}
\operatorname*{rank}A=\operatorname*{rank}\overline{A}.
\label{pf.lem.hank-ranks.1.rankA0}%
\end{equation}

\item If the first row of $A$ is \textbf{not} a linear combination of the
remaining rows, then%
\begin{equation}
\operatorname*{rank}A=\operatorname*{rank}\overline{A}+1.
\label{pf.lem.hank-ranks.1.rankA1}%
\end{equation}

\end{itemize}

It is furthermore well-known that if $A$ is any matrix, and if $B$ is any
submatrix of $A$, then $\operatorname*{rank}B\leq\operatorname*{rank}A$.
However, the matrix $\overline{H_{p,q-1}\left(  x\right)  }$ is a submatrix of
$H_{p-1,q}\left(  x\right)  $ (indeed, it can be obtained from $H_{p-1,q}%
\left(  x\right)  $ by removing the first column). Hence,
\[
\operatorname*{rank}\left(  \overline{H_{p,q-1}\left(  x\right)  }\right)
\leq\operatorname*{rank}\left(  H_{p-1,q}\left(  x\right)  \right)  .
\]

Let $\overline{x}$ denote the $n$-tuple $\left(  x_{1},x_{2},\ldots
,x_{n}\right)  \in F^{n}$. It is easy to see that
\begin{equation}
\overline{H_{u,v}\left(  x\right)  }=H_{u-1,v}\left(  \overline{x}\right)
\label{pf.lem.hank-ranks.1.Huvbar}%
\end{equation}
for all $u\in\mathbb{N}$ and $v\in\left\{  -1,0,1,\ldots\right\}  $ satisfying
$u+v\leq n$. Thus, in particular,%
\begin{equation}
\overline{H_{p,q-1}\left(  x\right)  }=H_{p-1,q-1}\left(  \overline{x}\right)
\label{pf.lem.hank-ranks.1.Huvbar1}%
\end{equation}
and%
\begin{equation}
\overline{H_{p-1,q}\left(  x\right)  }=H_{p-2,q}\left(  \overline{x}\right)  .
\label{pf.lem.hank-ranks.1.Huvbar2}%
\end{equation}

If the first row of the matrix $H_{p,q-1}\left(  x\right)  $ is a linear
combination of the remaining rows, then (\ref{pf.lem.hank-ranks.1.rankA0})
yields
\[
\operatorname*{rank}\left(  H_{p,q-1}\left(  x\right)  \right)
=\operatorname*{rank}\left(  \overline{H_{p,q-1}\left(  x\right)  }\right)
\leq\operatorname*{rank}\left(  H_{p-1,q}\left(  x\right)  \right)  ,
\]
which is precisely what we wanted to show. Hence, for the rest of this proof,
we WLOG assume that the first row of the matrix $H_{p,q-1}\left(  x\right)  $
is \textbf{not} a linear combination of the remaining rows. Thus,
(\ref{pf.lem.hank-ranks.1.rankA1}) yields%
\[
\operatorname*{rank}\left(  H_{p,q-1}\left(  x\right)  \right)
=\operatorname*{rank}\left(  \overline{H_{p,q-1}\left(  x\right)  }\right)
+1.
\]
In view of (\ref{pf.lem.hank-ranks.1.Huvbar1}), this rewrites as%
\begin{equation}
\operatorname*{rank}\left(  H_{p,q-1}\left(  x\right)  \right)
=\operatorname*{rank}\left(  H_{p-1,q-1}\left(  \overline{x}\right)  \right)
+1. \label{pf.lem.hank-ranks.1.plus1-a}%
\end{equation}
Hence,
\[
\operatorname*{rank}\left(  H_{p-1,q-1}\left(  \overline{x}\right)  \right)
=\underbrace{\operatorname*{rank}\left(  H_{p,q-1}\left(  x\right)  \right)
}_{\leq p}-1\leq p-1.
\]

Recall that the first row of the matrix $H_{p,q-1}\left(  x\right)  $ is not a
linear combination of the remaining rows. This entails that the first row of
the matrix $H_{p-1,q-1}\left(  x\right)  $ is not a linear combination of the
remaining rows (since the matrix $H_{p-1,q-1}\left(  x\right)  $ is the same
as $H_{p,q-1}\left(  x\right)  $ without the last row). Therefore, the first
row of the matrix $H_{p-1,q}\left(  x\right)  $ is not a linear combination of
the remaining rows (since the matrix $H_{p-1,q}\left(  x\right)  $ is just
$H_{p-1,q-1}\left(  x\right)  $ with an extra column). Thus,
(\ref{pf.lem.hank-ranks.1.rankA1}) yields%
\[
\operatorname*{rank}\left(  H_{p-1,q}\left(  x\right)  \right)
=\operatorname*{rank}\left(  \overline{H_{p-1,q}\left(  x\right)  }\right)
+1.
\]
In view of (\ref{pf.lem.hank-ranks.1.Huvbar2}), this rewrites as%
\begin{equation}
\operatorname*{rank}\left(  H_{p-1,q}\left(  x\right)  \right)
=\operatorname*{rank}\left(  H_{p-2,q}\left(  \overline{x}\right)  \right)
+1. \label{pf.lem.hank-ranks.1.plus1-b}%
\end{equation}

However, our induction hypothesis shows that we can apply Lemma
\ref{lem.hank-ranks.1} to $n-1$, $p-1$ and $\overline{x}$ instead of $n$, $p$
and $x$ (since $\operatorname*{rank}\left(  H_{p-1,q-1}\left(  \overline
{x}\right)  \right)  \leq p-1$). We thus obtain
\[
\operatorname*{rank}\left(  H_{p-1,q-1}\left(  \overline{x}\right)  \right)
\leq\operatorname*{rank}\left(  H_{p-2,q}\left(  \overline{x}\right)  \right)
.
\]
Adding $1$ to both sides of this inequality, we find%
\[
\operatorname*{rank}\left(  H_{p-1,q-1}\left(  \overline{x}\right)  \right)
+1\leq\operatorname*{rank}\left(  H_{p-2,q}\left(  \overline{x}\right)
\right)  +1.
\]
In view of (\ref{pf.lem.hank-ranks.1.plus1-a}) and
(\ref{pf.lem.hank-ranks.1.plus1-b}), this rewrites as $\operatorname*{rank}%
\left(  H_{p,q-1}\left(  x\right)  \right)  \leq\operatorname*{rank}\left(
H_{p-1,q}\left(  x\right)  \right)  $. This completes the induction step.
Thus, Lemma \ref{lem.hank-ranks.1} is proved.
\end{proof}

\begin{lemma}
\label{lem.hank-ranks.2}Let $n\in\mathbb{N}$. Let $p,q\in\mathbb{N}$ be such
that $p+q\leq n+1$. If $x\in F^{n+1}$ satisfies $\operatorname*{rank}\left(
H_{p-1,q}\left(  x\right)  \right)  \leq q$, then%
\[
\operatorname*{rank}\left(  H_{p-1,q}\left(  x\right)  \right)  \leq
\operatorname*{rank}\left(  H_{p,q-1}\left(  x\right)  \right)  .
\]

\end{lemma}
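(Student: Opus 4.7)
The plan is to deduce Lemma \ref{lem.hank-ranks.2} directly from Lemma \ref{lem.hank-ranks.1} by exploiting the symmetry of Hankel matrices under transposition. Concretely, for any $u,v$ with $u+v \leq n$, the definition of $H_{u,v}(x)$ shows that the $(i,j)$-entry depends only on $i+j$, hence
\[
H_{u,v}(x)^T = H_{v,u}(x).
\]
Since transposing a matrix does not change its rank, we have $\operatorname{rank}(H_{u,v}(x)) = \operatorname{rank}(H_{v,u}(x))$ for all such $u,v$.

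With this in hand, the proof is essentially a relabeling. First I would observe that the hypothesis $p+q \leq n+1$ is symmetric in $p$ and $q$, so I can apply Lemma \ref{lem.hank-ranks.1} after swapping the roles of $p$ and $q$. More precisely, Lemma \ref{lem.hank-ranks.1} (applied to $q$ in place of $p$ and to $p$ in place of $q$) states: if $x \in F^{n+1}$ satisfies $\operatorname{rank}(H_{q,p-1}(x)) \leq q$, then
\[
\operatorname{rank}(H_{q,p-1}(x)) \leq \operatorname{rank}(H_{q-1,p}(x)).
\]
Now I would translate each of the four rank statements into its Hankel-transpose form: $\operatorname{rank}(H_{q,p-1}(x)) = \operatorname{rank}(H_{p-1,q}(x))$ and $\operatorname{rank}(H_{q-1,p}(x)) = \operatorname{rank}(H_{p,q-1}(x))$. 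Both the hypothesis and the conclusion of the swapped Lemma \ref{lem.hank-ranks.1} then match exactly with the hypothesis and conclusion of Lemma \ref{lem.hank-ranks.2}, finishing the proof.

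There is essentially no obstacle in this argument; the only thing to double-check is the symmetry $H_{u,v}(x)^T = H_{v,u}(x)$, which is immediate from the formula $(H_{u,v}(x))_{i,j} = x_{i+j}$, and that the side condition $p+q \leq n+1$ is preserved under the swap of $p$ and $q$. Because of this, I would keep the written proof very short, essentially a single paragraph invoking Lemma \ref{lem.hank-ranks.1} after transposition, rather than repeating the inductive argument used there.
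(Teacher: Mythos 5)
Your proposal is correct and is essentially identical to the paper's own proof, which likewise derives Lemma \ref{lem.hank-ranks.2} from Lemma \ref{lem.hank-ranks.1} by swapping $p$ and $q$ and using the fact that $H_{p-1,q}\left(x\right)$ and $H_{p,q-1}\left(x\right)$ are the transposes of $H_{q,p-1}\left(x\right)$ and $H_{q-1,p}\left(x\right)$. No gaps; the level of detail you propose is appropriate.
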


\begin{proof}
[Proof of Lemma \ref{lem.hank-ranks.2}.]This is just a restatement of Lemma
\ref{lem.hank-ranks.1} (applied to $p$ and $q$ instead of $q$ and $p$), since
the matrices $H_{p-1,q}\left(  x\right)  $ and $H_{p,q-1}\left(  x\right)  $
are the transposes of the matrices $H_{q,p-1}\left(  x\right)  $ and
$H_{q-1,p}\left(  x\right)  $. (Alternatively, you can prove it by the same
argument as we used to prove Lemma \ref{lem.hank-ranks.1}, except that rows
and columns switch roles.)
\end{proof}

\begin{lemma}
\label{lem.hank-ranks.3}Let $n\in\mathbb{N}$. Let $p,q\in\mathbb{N}$ be such
that $p+q\leq n+1$. If $x\in F^{n+1}$ satisfies $\operatorname*{rank}\left(
H_{p,q-1}\left(  x\right)  \right)  \leq p$ and $\operatorname*{rank}\left(
H_{p-1,q}\left(  x\right)  \right)  \leq q$, then%
\[
\operatorname*{rank}\left(  H_{p,q-1}\left(  x\right)  \right)
=\operatorname*{rank}\left(  H_{p-1,q}\left(  x\right)  \right)  .
\]

\end{lemma}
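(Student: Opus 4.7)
My plan is to derive Lemma \ref{lem.hank-ranks.3} as an immediate consequence of Lemmas \ref{lem.hank-ranks.1} and \ref{lem.hank-ranks.2}, since the two hypotheses of Lemma \ref{lem.hank-ranks.3} are exactly the hypotheses of those two lemmas, and together they pin the two ranks between each other.

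Concretely, I would proceed in three short steps. First, I would apply Lemma \ref{lem.hank-ranks.1} to the given $n$, $p$, $q$ and $x$; its hypothesis $\operatorname{rank}(H_{p,q-1}(x)) \leq p$ is supplied by the first assumption of Lemma \ref{lem.hank-ranks.3}, so I obtain the inequality
\[
\operatorname{rank}(H_{p,q-1}(x)) \leq \operatorname{rank}(H_{p-1,q}(x)).
\]
Second, I would apply Lemma \ref{lem.hank-ranks.2} to the same data; its hypothesis $\operatorname{rank}(H_{p-1,q}(x)) \leq q$ is supplied by the second assumption of Lemma \ref{lem.hank-ranks.3}, so I obtain the reverse inequality
\[
\operatorname{rank}(H_{p-1,q}(x)) \leq \operatorname{rank}(H_{p,q-1}(x)).
\]
Third, I would combine the two inequalities to conclude $\operatorname{rank}(H_{p,q-1}(x)) = \operatorname{rank}(H_{p-1,q}(x))$, which is what Lemma \ref{lem.hank-ranks.3} asserts.

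There is really no obstacle here: the lemma is a pure bookkeeping corollary of the two previous lemmas, and the constraint $p+q \leq n+1$ needed to invoke them is already part of the hypotheses of Lemma \ref{lem.hank-ranks.3}. The only thing worth double-checking is that Lemmas \ref{lem.hank-ranks.1} and \ref{lem.hank-ranks.2} are stated with the same indexing convention (they are: both use the pair $(H_{p,q-1}(x), H_{p-1,q}(x))$, just with opposite directions of the inequality), so no reindexing is needed before combining them.
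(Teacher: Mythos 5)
Your proposal is correct and is exactly the paper's own argument: the paper proves Lemma \ref{lem.hank-ranks.3} by combining Lemma \ref{lem.hank-ranks.1} with Lemma \ref{lem.hank-ranks.2}, obtaining the two opposite inequalities and concluding equality. Your version merely spells out the two applications explicitly, which is a fine (and perhaps clearer) way to present the same one-line deduction.
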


\begin{proof}
[Proof of Lemma \ref{lem.hank-ranks.3}.]This follows by combining Lemma
\ref{lem.hank-ranks.1} with Lemma \ref{lem.hank-ranks.2}.
\end{proof}

Our next lemma is a simple corollary of Lemma \ref{lem.hank-ranks.3}:

\begin{lemma}
\label{lem.hank-ranks.5}Let $n\in\mathbb{N}$. Let $p,q\in\mathbb{N}$ be such
that $p+q\leq n+1$. Let $r\in\mathbb{N}$ satisfy $r+1\leq p$ and $r+1\leq q$.
Let $x\in F^{n+1}$. Then, we have the logical equivalence%
\[
\left(  \operatorname*{rank}\left(  H_{p,q-1}\left(  x\right)  \right)  \leq
r\right)  \ \Longleftrightarrow\ \left(  \operatorname*{rank}\left(
H_{p-1,q}\left(  x\right)  \right)  \leq r\right)  .
\]

\end{lemma}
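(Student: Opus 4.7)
The plan is to deduce both directions of the equivalence from Lemma \ref{lem.hank-ranks.3}, using the obvious submatrix relationships between $H_{p,q-1}(x)$, $H_{p-1,q}(x)$ and $H_{p-1,q-1}(x)$ to supply the ``missing'' hypothesis of that lemma in each direction.

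For the forward implication, I will assume $\operatorname{rank}\left( H_{p,q-1}(x) \right) \leq r$. Since $r \leq p-1 \leq p$, this already gives the first hypothesis of Lemma \ref{lem.hank-ranks.3}. To obtain the second hypothesis, namely $\operatorname{rank}\left( H_{p-1,q}(x) \right) \leq q$, I observe that $H_{p-1,q-1}(x)$ is a submatrix of $H_{p,q-1}(x)$ (delete the bottom row), so $\operatorname{rank}\left( H_{p-1,q-1}(x) \right) \leq r$; but $H_{p-1,q}(x)$ is just $H_{p-1,q-1}(x)$ with a single extra column appended at the right, and appending a column can increase the rank by at most $1$, so $\operatorname{rank}\left( H_{p-1,q}(x) \right) \leq r+1 \leq q$. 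Lemma \ref{lem.hank-ranks.3} then yields the equality $\operatorname{rank}\left( H_{p,q-1}(x) \right) = \operatorname{rank}\left( H_{p-1,q}(x) \right)$, and in particular $\operatorname{rank}\left( H_{p-1,q}(x) \right) \leq r$.

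The backward implication is entirely symmetric: assuming $\operatorname{rank}\left( H_{p-1,q}(x) \right) \leq r$, I get $\operatorname{rank}\left( H_{p-1,q}(x) \right) \leq r \leq q-1 \leq q$ at once, and the bound $\operatorname{rank}\left( H_{p,q-1}(x) \right) \leq r+1 \leq p$ by the analogous argument with ``column'' replaced by ``row'' (noting that $H_{p-1,q-1}(x)$ is also a submatrix of $H_{p-1,q}(x)$, and $H_{p,q-1}(x)$ is $H_{p-1,q-1}(x)$ with one extra row appended). Lemma \ref{lem.hank-ranks.3} again gives equality of the two ranks, so $\operatorname{rank}\left( H_{p,q-1}(x) \right) \leq r$.

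There is no real obstacle here; the only thing to keep an eye on is that the crude upper bound $r+1$ obtained by adding one row or column must still fit within the hypotheses of Lemma \ref{lem.hank-ranks.3}, which is exactly why the strengthened assumptions $r+1 \leq p$ and $r+1 \leq q$ (rather than $r \leq p$ and $r \leq q$) are needed in this lemma.
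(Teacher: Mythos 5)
Your proof is correct and follows essentially the same route as the paper: both directions are reduced to Lemma \ref{lem.hank-ranks.3} by passing through the common submatrix $H_{p-1,q-1}\left(x\right)$ and using the fact that appending one row or column raises the rank by at most $1$, which is exactly where the hypotheses $r+1\leq p$ and $r+1\leq q$ enter. The only difference is cosmetic: the paper writes out the forward implication and declares the backward one analogous, while you spell out both.
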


\begin{proof}
[Proof of Lemma \ref{lem.hank-ranks.5}.]We must prove the two implications%
\begin{equation}
\left(  \operatorname*{rank}\left(  H_{p,q-1}\left(  x\right)  \right)  \leq
r\right)  \ \Longrightarrow\ \left(  \operatorname*{rank}\left(
H_{p-1,q}\left(  x\right)  \right)  \leq r\right)
\label{pf.lem.hank-ranks.5.goal1}%
\end{equation}
and%
\begin{equation}
\left(  \operatorname*{rank}\left(  H_{p-1,q}\left(  x\right)  \right)  \leq
r\right)  \ \Longrightarrow\ \left(  \operatorname*{rank}\left(
H_{p,q-1}\left(  x\right)  \right)  \leq r\right)  .
\label{pf.lem.hank-ranks.5.goal2}%
\end{equation}
We shall only prove (\ref{pf.lem.hank-ranks.5.goal1}), since
(\ref{pf.lem.hank-ranks.5.goal2}) is entirely analogous.

So let us prove (\ref{pf.lem.hank-ranks.5.goal1}). We assume that
$\operatorname*{rank}\left(  H_{p,q-1}\left(  x\right)  \right)  \leq r$; we
then must show that $\operatorname*{rank}\left(  H_{p-1,q}\left(  x\right)
\right)  \leq r$.

The matrix $H_{p-1,q-1}\left(  x\right)  $ is a submatrix of $H_{p,q-1}\left(
x\right)  $, and thus its rank cannot surpass the rank of $H_{p,q-1}\left(
x\right)  $. In other words, we have $\operatorname*{rank}\left(
H_{p-1,q-1}\left(  x\right)  \right)  \leq\operatorname*{rank}\left(
H_{p,q-1}\left(  x\right)  \right)  $.

However, the matrix $H_{p-1,q}\left(  x\right)  $ can be viewed as being the
matrix $H_{p-1,q-1}\left(  x\right)  $ with one extra column attached to it
(at its right end). Thus,%
\[
\operatorname*{rank}\left(  H_{p-1,q}\left(  x\right)  \right)  \leq
\operatorname*{rank}\left(  H_{p-1,q-1}\left(  x\right)  \right)  +1
\]
(since attaching one column cannot increase the rank of a matrix by more than
$1$). Hence,%
\[
\operatorname*{rank}\left(  H_{p-1,q}\left(  x\right)  \right)  \leq
\underbrace{\operatorname*{rank}\left(  H_{p-1,q-1}\left(  x\right)  \right)
}_{\leq\operatorname*{rank}\left(  H_{p,q-1}\left(  x\right)  \right)  \leq
r}+1\leq r+1\leq q.
\]
Moreover, $\operatorname*{rank}\left(  H_{p,q-1}\left(  x\right)  \right)
\leq r\leq r+1\leq p$. Hence, we can apply Lemma \ref{lem.hank-ranks.3}, and
conclude that $\operatorname*{rank}\left(  H_{p,q-1}\left(  x\right)  \right)
=\operatorname*{rank}\left(  H_{p-1,q}\left(  x\right)  \right)  $. Thus, of
course, $\operatorname*{rank}\left(  H_{p-1,q}\left(  x\right)  \right)  \leq
r$ follows immediately from our assumption $\operatorname*{rank}\left(
H_{p,q-1}\left(  x\right)  \right)  \leq r$. Hence,
(\ref{pf.lem.hank-ranks.5.goal1}) is proved.

As we said, the proof of (\ref{pf.lem.hank-ranks.5.goal2}) is analogous. Thus,
the proof of Lemma \ref{lem.hank-ranks.5} is complete.
\end{proof}

The following lemma is a (much simpler) counterpart to Lemma
\ref{lem.hank-ranks.1} that replaces the assumption $\operatorname*{rank}%
\left(  H_{p,q-1}\left(  x\right)  \right)  \leq p$ by the reverse inequality:

\begin{lemma}
\label{lem.hank-ranks.4}Let $n\in\mathbb{N}$. Let $p,q\in\mathbb{N}$ be such
that $p+q\leq n+1$. If $x\in F^{n+1}$ satisfies $\operatorname*{rank}\left(
H_{p,q-1}\left(  x\right)  \right)  >p$, then%
\[
\operatorname*{rank}\left(  H_{p-1,q}\left(  x\right)  \right)  =p.
\]

\end{lemma}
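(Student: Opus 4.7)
The plan is a short chain of matrix-size considerations; no induction is needed. The key observation is that $H_{p,q-1}(x)$ is a $(p+1) \times q$ matrix, so its rank cannot exceed $p+1$; hence the hypothesis $\operatorname{rank}(H_{p,q-1}(x)) > p$ forces $\operatorname{rank}(H_{p,q-1}(x)) = p+1$. In particular, all $p+1$ rows of $H_{p,q-1}(x)$ are linearly independent.

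Next, I would pass to the submatrix $H_{p-1,q-1}(x)$, which consists of the first $p$ rows of $H_{p,q-1}(x)$ (it is $H_{p,q-1}(x)$ with its last row removed). Since these $p$ rows are a subfamily of a linearly independent family, they are themselves linearly independent, so $\operatorname{rank}(H_{p-1,q-1}(x)) = p$.

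Finally, the matrix $H_{p-1,q}(x)$ can be obtained from $H_{p-1,q-1}(x)$ by appending one column on the right, so $\operatorname{rank}(H_{p-1,q}(x)) \geq \operatorname{rank}(H_{p-1,q-1}(x)) = p$. On the other hand, $H_{p-1,q}(x)$ has only $p$ rows, so $\operatorname{rank}(H_{p-1,q}(x)) \leq p$. Combining the two inequalities yields $\operatorname{rank}(H_{p-1,q}(x)) = p$, as desired.

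I do not expect any genuine obstacle here; the only mild care needed is in tracking the matrix dimensions and the two elementary facts that (i) removing a row or column cannot increase the rank, and (ii) adding a column cannot decrease the rank. The assumption $p+q \leq n+1$ is used only to ensure that the Hankel matrices $H_{p,q-1}(x)$, $H_{p-1,q}(x)$, and $H_{p-1,q-1}(x)$ are all well-defined (i.e., that their required entries lie in $x_0, x_1, \ldots, x_n$).
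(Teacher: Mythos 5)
Your proof is correct and follows essentially the same route as the paper's: deduce from $\operatorname{rank}\left(H_{p,q-1}\left(x\right)\right)>p$ that all $p+1$ rows of $H_{p,q-1}\left(x\right)$ are linearly independent, pass to the first $p$ rows to get $\operatorname{rank}\left(H_{p-1,q-1}\left(x\right)\right)=p$, and then observe that appending a column to reach $H_{p-1,q}\left(x\right)$ preserves this. The paper phrases the last step as "extending linearly independent rows keeps them linearly independent" rather than "adding a column cannot decrease the rank," but this is the same elementary fact.
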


\begin{proof}
[Proof of Lemma \ref{lem.hank-ranks.4}.]Let $x\in F^{n+1}$ satisfy
$\operatorname*{rank}\left(  H_{p,q-1}\left(  x\right)  \right)  >p$. The
assumption $\operatorname*{rank}\left(  H_{p,q-1}\left(  x\right)  \right)
>p$ shows that the $p+1$ rows of the matrix $H_{p,q-1}\left(  x\right)  $ are
linearly independent. Hence, in particular, the $p$ rows of the matrix
$H_{p-1,q-1}\left(  x\right)  $ are linearly independent (since these $p$ rows
are simply the first $p$ rows of the matrix $H_{p,q-1}\left(  x\right)  $).
Therefore, the $p$ rows of the matrix $H_{p-1,q}\left(  x\right)  $ are
linearly independent as well (since the matrix $H_{p-1,q}\left(  x\right)  $
is just $H_{p-1,q-1}\left(  x\right)  $ with an extra column, and therefore
the rows of the former contain the rows of the latter as subsequences). In
other words, $\operatorname*{rank}\left(  H_{p-1,q}\left(  x\right)  \right)
=p$. This proves Lemma \ref{lem.hank-ranks.4}.
\end{proof}

Our above lemmas have related ranks of the \textquotedblleft
adjacent\textquotedblright\ Hankel matrices $\operatorname*{rank}\left(
H_{p,q-1}\left(  x\right)  \right)  $ and $\operatorname*{rank}\left(
H_{p-1,q}\left(  x\right)  \right)  $. By induction, we shall now extend these
to further-apart Hankel matrices:

\begin{lemma}
\label{lem.hank-ranks.6}Let $u\in\mathbb{N}$. Let $m,n,r\in\mathbb{N}$ be such
that $m+n\leq u$ and $r\leq m$ and $r\leq n$. Let $s=m+n-r$. Let $x\in
F^{u+1}$ be arbitrary. Then, we have the logical equivalence%
\[
\left(  \operatorname*{rank}\left(  H_{m,n}\left(  x\right)  \right)  \leq
r\right)  \ \Longleftrightarrow\ \left(  \operatorname*{rank}\left(
H_{r,s}\left(  x\right)  \right)  \leq r\right)  .
\]

\end{lemma}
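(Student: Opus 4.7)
The plan is to iterate Lemma \ref{lem.hank-ranks.5}, which provides a one-step ``sliding'' equivalence between $\operatorname{rank} \leq r$ conditions for adjacent Hankel matrices $H_{p,q-1}(x)$ and $H_{p-1,q}(x)$. Specifically, I want to slide along the anti-diagonal $p + q = m + n$ from the starting point $(m,n)$ to the target point $(r,s) = (r, m+n-r)$; since $r \leq m$, this takes exactly $m - r$ steps of the form $(p,q) \mapsto (p-1, q+1)$. Because $r \leq m$ gives $s \geq n$, the sliding always moves in a single direction, so only one half of the equivalence in Lemma \ref{lem.hank-ranks.5} is ever needed.

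First I would induct on $m - r$. The base case $m = r$ is immediate, since then $s = n$ and both sides of the claimed equivalence are literally the same statement. For the induction step (when $m > r$), I apply Lemma \ref{lem.hank-ranks.5} with its parameters $p$ and $q$ set to $m$ and $n+1$ respectively (and its $n$ playing the role of our $u$); the required hypotheses reduce to $m + (n+1) \leq u + 1$ (which follows from $m + n \leq u$), $r + 1 \leq m$ (which follows from $m > r$), and $r + 1 \leq n + 1$ (which follows from $r \leq n$). This yields the equivalence between $\operatorname{rank}(H_{m,n}(x)) \leq r$ and $\operatorname{rank}(H_{m-1, n+1}(x)) \leq r$.

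Then I would invoke the induction hypothesis applied with $(m-1, n+1)$ in place of $(m,n)$: the premises $r \leq m - 1$ (holding since $m > r$), $r \leq n + 1$ (holding since $r \leq n$), and $(m-1) + (n+1) \leq u$ (holding since $m + n \leq u$) are all satisfied, and $(m-1) + (n+1) - r = m + n - r = s$, so the induction hypothesis gives the equivalence between $\operatorname{rank}(H_{m-1, n+1}(x)) \leq r$ and $\operatorname{rank}(H_{r,s}(x)) \leq r$. Chaining the two equivalences completes the induction step.

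I do not anticipate any serious obstacle. The proof is essentially a bookkeeping exercise verifying that the hypotheses of Lemma \ref{lem.hank-ranks.5} remain satisfied at every intermediate position $(m - i, n + i)$ for $i = 0, 1, \ldots, m - r - 1$, which is automatic from the symmetric assumption $r \leq \min\{m, n\}$ together with $m + n \leq u$.
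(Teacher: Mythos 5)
Your proposal is correct and follows essentially the same route as the paper: both iterate Lemma \ref{lem.hank-ranks.5} along the anti-diagonal $p+q=m+n$ between $(m,n)$ and $(r,s)$, the only cosmetic difference being that the paper fixes the endpoint $(r,s)$ and inducts on the offset $i$ in $H_{r+i,s-i}$, whereas you induct on $m-r$ and shrink $(m,n)$ to $(m-1,n+1)$ at each step. The hypothesis checks you list ($m+(n+1)\leq u+1$, $r+1\leq m$, $r+1\leq n+1$) match the paper's verification exactly.
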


Before we prove this lemma, let us comment on its significance (even though we
will use it rather directly): If one wants to determine the rank of a $\left(
m+1\right)  \times\left(  n+1\right)  $-matrix $A$, it suffices to probe for
each $r\in\left\{  0,1,\ldots,\min\left\{  m,n\right\}  \right\}  $ whether
$\operatorname*{rank}A\leq r$ is true (since $0\leq\operatorname*{rank}%
A\leq\min\left\{  m,n\right\}  +1$). Thus, Lemma \ref{lem.hank-ranks.6} allows
us to determine the ranks of the various matrices $H_{m,n}\left(  x\right)  $
for a given $x\in F^{u+1}$ if we know which pairs $\left(  r,s\right)  $
satisfy $\operatorname*{rank}\left(  H_{r,s}\left(  x\right)  \right)  \leq r$.

\begin{proof}
[Proof of Lemma \ref{lem.hank-ranks.6}.]From $s=m+n-r=\left(  m-r\right)  +n$,
we obtain $s-\left(  m-r\right)  =n$. Furthermore, $s=m+n-\underbrace{r}_{\leq
m}\geq m+n-m=n$ and similarly $s\geq m$. Hence, $m\leq s$.

We now claim that the equivalence%
\begin{equation}
\left(  \operatorname*{rank}\left(  H_{r+i,s-i}\left(  x\right)  \right)  \leq
r\right)  \ \Longleftrightarrow\ \left(  \operatorname*{rank}\left(
H_{r,s}\left(  x\right)  \right)  \leq r\right)
\label{pf.lem.hank-ranks.6.eq}%
\end{equation}
holds for each $i\in\left\{  0,1,\ldots,s-r\right\}  $.

[\textit{Proof of (\ref{pf.lem.hank-ranks.6.eq}):} We proceed by induction on
$i$:

\textit{Induction base:} Clearly, (\ref{pf.lem.hank-ranks.6.eq}) holds for
$i=0$, since we have $H_{r+i,s-i}\left(  x\right)  =H_{r+0,s-0}\left(
x\right)  =H_{r,s}\left(  x\right)  $ in this case.

\textit{Induction step:} Let $j\in\left\{  1,2,\ldots,s-r\right\}  $. Assume
(as the induction hypothesis) that (\ref{pf.lem.hank-ranks.6.eq}) holds for
$i=j-1$. We must prove that (\ref{pf.lem.hank-ranks.6.eq}) holds for $i=j$. In
other words, we must prove the equivalence%
\begin{equation}
\left(  \operatorname*{rank}\left(  H_{r+j,s-j}\left(  x\right)  \right)  \leq
r\right)  \ \Longleftrightarrow\ \left(  \operatorname*{rank}\left(
H_{r,s}\left(  x\right)  \right)  \leq r\right)  .
\label{pf.lem.hank-ranks.6.eq.pf.goal}%
\end{equation}

However, our induction hypothesis tells us that the equivalence%
\begin{equation}
\left(  \operatorname*{rank}\left(  H_{r+\left(  j-1\right)  ,s-\left(
j-1\right)  }\left(  x\right)  \right)  \leq r\right)  \ \Longleftrightarrow
\ \left(  \operatorname*{rank}\left(  H_{r,s}\left(  x\right)  \right)  \leq
r\right)  \label{pf.lem.hank-ranks.6.eq.pf.1}%
\end{equation}
holds.

We have
\[
\left(  r+j\right)  +\left(  s-j+1\right)  =r+\underbrace{s}_{=m+n-r}%
+1=r+\left(  m+n-r\right)  +1=\underbrace{m+n}_{\leq u}+1\leq u+1.
\]
Furthermore, we have $j\in\left\{  1,2,\ldots,s-r\right\}  $, so that $1\leq
j\leq s-r$. From $j\leq s-r$, we obtain $r\leq s-j$, so that $r+1\leq s-j+1$.
This entails $s-j+1\in\mathbb{N}$ (since $r+1\in\mathbb{N}$). Also, $r+1\leq
r+j$ (since $j\geq1$). Hence, Lemma \ref{lem.hank-ranks.5} (applied to $p=r+j$
and $n=s-j+1$) yields that we have the logical equivalence%
\[
\left(  \operatorname*{rank}\left(  H_{r+j,s-j+1-1}\left(  x\right)  \right)
\leq r\right)  \ \Longleftrightarrow\ \left(  \operatorname*{rank}\left(
H_{r+j-1,s-j+1}\left(  x\right)  \right)  \leq r\right)  .
\]
In other words, we have the equivalence%
\[
\left(  \operatorname*{rank}\left(  H_{r+j,s-j}\left(  x\right)  \right)  \leq
r\right)  \ \Longleftrightarrow\ \left(  \operatorname*{rank}\left(
H_{r+\left(  j-1\right)  ,s-\left(  j-1\right)  }\left(  x\right)  \right)
\leq r\right)
\]
(since $s-j+1-1=s-j$ and $r+j-1=r+\left(  j-1\right)  $ and $s-j+1=s-\left(
j-1\right)  $). Combining this equivalence with
(\ref{pf.lem.hank-ranks.6.eq.pf.1}), we obtain precisely the equivalence
(\ref{pf.lem.hank-ranks.6.eq.pf.goal}) that we were meaning to prove.

Thus, we have shown that (\ref{pf.lem.hank-ranks.6.eq}) holds for $i=j$. This
completes the induction step, so that (\ref{pf.lem.hank-ranks.6.eq}) is proven.]

Now, $m\in\left\{  r,r+1,\ldots,s\right\}  $ (since $r\leq m\leq s$), so that
$m-r\in\left\{  0,1,\ldots,s-r\right\}  $. Hence, we can apply
(\ref{pf.lem.hank-ranks.6.eq}) to $i=m-r$. As a result, we obtain that the
equivalence%
\[
\left(  \operatorname*{rank}\left(  H_{r+m-r,s-\left(  m-r\right)  }\left(
x\right)  \right)  \leq r\right)  \ \Longleftrightarrow\ \left(
\operatorname*{rank}\left(  H_{r,s}\left(  x\right)  \right)  \leq r\right)
\]
holds. In other words, the equivalence%
\[
\left(  \operatorname*{rank}\left(  H_{m,n}\left(  x\right)  \right)  \leq
r\right)  \ \Longleftrightarrow\ \left(  \operatorname*{rank}\left(
H_{r,s}\left(  x\right)  \right)  \leq r\right)
\]
holds (since $r+m-r=m$ and $s-\left(  m-r\right)  =n$). This proves Lemma
\ref{lem.hank-ranks.6}.
\end{proof}

\section{\label{sec.main-lem}Auxiliary enumerative results}

\subsection{Assumptions and notations}

From now on, we assume that the field $F$ is finite. We set $q=\left\vert
F\right\vert $.

We shall use the so-called
\href{https://en.wikipedia.org/wiki/Iverson_bracket}{\textit{Iverson bracket
notation}}:

\begin{definition}
\label{def.iverson}If $\mathcal{A}$ is any logical statement, then we define
an integer $\left[  \mathcal{A}\right]  \in\left\{  0,1\right\}  $ by%
\[
\left[  \mathcal{A}\right]  =%
\begin{cases}
1, & \text{if }\mathcal{A}\text{ is true};\\
0, & \text{if }\mathcal{A}\text{ is false}.
\end{cases}
\]
For example, $\left[  2+2=4\right]  =1$ but $\left[  2+2=5\right]  =0$.

If $\mathcal{A}$ is any logical statement, then $\left[  \mathcal{A}\right]  $
is known as the \emph{truth value} of $\mathcal{A}$.
\end{definition}

The following fact (\textquotedblleft counting by roll-call\textquotedblright)
makes truth values useful to us:

\begin{proposition}
\label{prop.iverson.rollcall}Let $S$ be a finite set. Let $\mathcal{A}\left(
s\right)  $ be a logical statement for each $s\in S$. Then, $\sum_{s\in
S}\left[  \mathcal{A}\left(  s\right)  \right]  $ equals the number of
elements $s\in S$ satisfying $\mathcal{A}\left(  s\right)  $.
\end{proposition}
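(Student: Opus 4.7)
The plan is to give a direct proof straight from the definition of the Iverson bracket, with no induction or clever tricks needed, since the statement amounts to unpacking notation.

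First, I would partition the set $S$ into two disjoint subsets
\[
S_{\text{yes}} = \set{s \in S \ \mid\ \mathcal{A}(s) \text{ is true}} \quad \text{and} \quad S_{\text{no}} = \set{s \in S \ \mid\ \mathcal{A}(s) \text{ is false}},
\]
both of which are finite since $S$ is. By Definition \ref{def.iverson}, for every $s \in S_{\text{yes}}$ we have $\ive{\mathcal{A}(s)} = 1$, and for every $s \in S_{\text{no}}$ we have $\ive{\mathcal{A}(s)} = 0$.

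Next, I would split the sum according to this partition and evaluate each piece:
\[
\sum_{s \in S} \ive{\mathcal{A}(s)} = \sum_{s \in S_{\text{yes}}} \ive{\mathcal{A}(s)} + \sum_{s \in S_{\text{no}}} \ive{\mathcal{A}(s)} = \sum_{s \in S_{\text{yes}}} 1 + \sum_{s \in S_{\text{no}}} 0 = \abs{S_{\text{yes}}}.
\]
Since $\abs{S_{\text{yes}}}$ is by construction the number of elements $s \in S$ satisfying $\mathcal{A}(s)$, this yields the proposition.

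There is no real obstacle here; the only thing to be careful about is to invoke the definition of the Iverson bracket explicitly rather than treating it as self-evident, so that the proof is complete in its own terms. The argument takes no input beyond Definition \ref{def.iverson} and the additivity of finite sums over a partition of the index set.
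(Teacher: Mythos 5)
Your proof is correct. The paper states Proposition \ref{prop.iverson.rollcall} without any proof at all, treating it as evident from Definition \ref{def.iverson}; your partition of $S$ into the subset where $\mathcal{A}(s)$ holds and the subset where it fails, followed by splitting the sum, is precisely the standard justification the paper leaves implicit, so there is nothing to contrast.
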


\subsection{Sums over $v$ for fixed $x$}

The following proposition is a restatement of \cite[Proposition 2]{Elkies}
(but we shall prove it nevertheless to keep this note self-contained):

\begin{proposition}
\label{prop.elkies-prop2}Let $m,n\in\mathbb{N}$ satisfy $m\leq n+1$. Let $x\in
F^{m+n+1}$ be a $\left(  m+n+1\right)  $-tuple. Then,%
\begin{align*}
&  \left(  q-1\right)  \cdot\left[  \operatorname*{rank}\left(  H_{m,n}\left(
x\right)  \right)  \leq m\right] \\
&  =\sum_{\substack{v\in F^{1\times\left(  m+1\right)  };\\v\neq0}}\left[
v\ H_{m,n}\left(  x\right)  =0\right]  -q\sum_{\substack{v\in F^{1\times
m};\\v\neq0}}\left[  v\ H_{m-1,n+1}\left(  x\right)  =0\right]  .
\end{align*}

\end{proposition}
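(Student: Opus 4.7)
The plan is to evaluate both sides of the claimed identity in terms of the two ranks
\[
r := \operatorname{rank}\left(  H_{m,n}\left(  x\right)  \right)
\qquad\text{and}\qquad
s := \operatorname{rank}\left(  H_{m-1,n+1}\left(  x\right)  \right),
\]
and then invoke the rank lemmas from Section \ref{sec.hank-ranks} to link $r$ with $s$.

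First I would rewrite each of the two sums as the size of a left nullspace minus $1$. The first sum counts the nonzero row vectors $v\in F^{1\times\left(m+1\right)}$ annihilating $H_{m,n}\left(x\right)$; since that nullspace has dimension $m+1-r$, the sum equals $q^{m+1-r}-1$. Analogously, the second sum equals $q^{m-s}-1$. Substituting, the right-hand side simplifies to
\[
\left(q^{m+1-r}-1\right) - q\left(q^{m-s}-1\right) \;=\; q^{m+1-r} - q^{m-s+1} + q - 1.
\]

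Now I would split into two cases. In the main case $r\leq m$, the left-hand side equals $q-1$, so the identity reduces to $q^{m+1-r}=q^{m-s+1}$, i.e.\ $r=s$. For this I would apply Lemma \ref{lem.hank-ranks.3} with its parameters $p$ and $q$ set to $m$ and $n+1$ respectively (the name clash with $q=\left\vert F\right\vert$ being unfortunate), and with its $n$ set to $m+n$ so that $x\in F^{m+n+1}$ fits: the condition $p+q\leq m+n+1$ then holds with equality, the hypothesis $\operatorname{rank}\left(H_{m,n}\left(x\right)\right)\leq m$ is exactly $r\leq m$, and the hypothesis $\operatorname{rank}\left(H_{m-1,n+1}\left(x\right)\right)\leq n+1$ is automatic because that matrix has only $m\leq n+1$ rows. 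Lemma \ref{lem.hank-ranks.3} therefore forces $r=s$.

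The remaining case is $r=m+1$, which can occur only when $m\leq n$. Here the left-hand side vanishes, and I need $s=m$. This follows directly: if $H_{m,n}\left(x\right)$ has full row rank $m+1$, then its top $m$ rows form $H_{m-1,n}\left(x\right)$ and are linearly independent, so $\operatorname{rank}\left(H_{m-1,n}\left(x\right)\right)=m$; since $H_{m-1,n}\left(x\right)$ is a submatrix of $H_{m-1,n+1}\left(x\right)$, we get $s\geq m$, and equality holds because $H_{m-1,n+1}\left(x\right)$ has only $m$ rows. I expect the only real point of friction to be treating this case separately so as not to misapply Lemma \ref{lem.hank-ranks.3} where its hypothesis $r\leq m$ fails; otherwise the argument is a brief dimension count combined with one rank lemma.
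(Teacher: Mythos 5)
Your proposal is correct and follows essentially the same route as the paper: both compute the two sums via rank--nullity as sizes of left kernels, split on whether $\operatorname*{rank}\left(H_{m,n}\left(x\right)\right)\leq m$, and invoke Lemma \ref{lem.hank-ranks.3} (applied to $m+n$, $m$, $n+1$) in the main case. The only cosmetic difference is that in the full-rank case you re-derive inline what the paper has already packaged as Lemma \ref{lem.hank-ranks.4}.
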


Before we prove this proposition, a few words about its significance are worth
saying. Assume that, as a first step towards proving Theorem
\ref{thm.hankel.mn<=rk}, we want to count the $\left(  m+n+1\right)  $-tuples
$x\in F^{m+n+1}$ satisfying $\operatorname*{rank}\left(  H_{m,n}\left(
x\right)  \right)  \leq m$. (This is just an interim goal; we will later
generalize this inequality to $\operatorname*{rank}\left(  H_{m,n}\left(
x\right)  \right)  \leq r$ and impose the additional condition $x_{\left[
0,k\right)  }=a$.) In view of Proposition \ref{prop.iverson.rollcall}, this
boils down to computing $\sum_{x\in F^{m+n+1}}\left[  \operatorname*{rank}%
\left(  H_{m,n}\left(  x\right)  \right)  \leq m\right]  $. Using Proposition
\ref{prop.elkies-prop2}, we can rewrite the addends $\left[
\operatorname*{rank}\left(  H_{m,n}\left(  x\right)  \right)  \leq m\right]  $
in this sum in terms of other truth values, which are more \textquotedblleft
local\textquotedblright\ (one can think of \textquotedblleft%
$\operatorname*{rank}\left(  H_{m,n}\left(  x\right)  \right)  \leq
m$\textquotedblright\ as a \textquotedblleft global\textquotedblright%
\ statement about the matrix $H_{m,n}\left(  x\right)  $, whereas the
statements \textquotedblleft$v\ H_{m,n}\left(  x\right)  =0$\textquotedblright%
\ and \textquotedblleft$v\ H_{m-1,n+1}\left(  x\right)  =0$\textquotedblright%
\ are local in the sense that they only \textquotedblleft
sample\textquotedblright\ the matrix at a single vector each) and thus (as we
will soon see) are easier to sum.

\begin{proof}
[Proof of Proposition \ref{prop.elkies-prop2}.]We are in one of the following
two cases:

\textit{Case 1:} We have $\operatorname*{rank}\left(  H_{m,n}\left(  x\right)
\right)  >m$.

\textit{Case 2:} We have $\operatorname*{rank}\left(  H_{m,n}\left(  x\right)
\right)  \leq m$.

Let us first consider Case 1. In this case, we have $\operatorname*{rank}%
\left(  H_{m,n}\left(  x\right)  \right)  >m$. Thus, $\operatorname*{rank}%
\left(  H_{m,n}\left(  x\right)  \right)  =m+1$ (since $H_{m,n}\left(
x\right)  $ is an $\left(  m+1\right)  \times\left(  n+1\right)  $-matrix).
Therefore, the rows of the matrix $H_{m,n}\left(  x\right)  $ are linearly
independent. Hence, there exists no nonzero $v\in F^{1\times\left(
m+1\right)  }$ satisfying $v\ H_{m,n}\left(  x\right)  =0$. Therefore,%
\begin{equation}
\sum_{\substack{v\in F^{1\times\left(  m+1\right)  };\\v\neq0}}\left[
v\ H_{m,n}\left(  x\right)  =0\right]  =0. \label{pf.prop.elkies-prop2.IB.1}%
\end{equation}

Moreover, Lemma \ref{lem.hank-ranks.4} (applied to $m+n$, $m$ and $n+1$
instead of $n$, $p$ and $q$) yields that $\operatorname*{rank}\left(
H_{m-1,n+1}\left(  x\right)  \right)  =m$ (since $\operatorname*{rank}\left(
H_{m,n}\left(  x\right)  \right)  >m$). In other words, the $m\times\left(
n+2\right)  $-matrix $H_{m-1,n+1}\left(  x\right)  $ has rank $m$. Hence,
there exists no nonzero $v\in F^{1\times m}$ satisfying $v\ H_{m-1,n+1}\left(
x\right)  =0$. Therefore,%
\begin{equation}
\sum_{\substack{v\in F^{1\times m};\\v\neq0}}\left[  v\ H_{m-1,n+1}\left(
x\right)  =0\right]  =0. \label{pf.prop.elkies-prop2.IB.2}%
\end{equation}

Finally, $\left[  \operatorname*{rank}\left(  H_{m,n}\left(  x\right)
\right)  \leq m\right]  =0$ (since $\operatorname*{rank}\left(  H_{m,n}\left(
x\right)  \right)  >m$). In view of this equality, as well as
(\ref{pf.prop.elkies-prop2.IB.1}) and (\ref{pf.prop.elkies-prop2.IB.2}), the
equality that we are trying to prove rewrites as $\left(  q-1\right)
\cdot0=0-q\cdot0$, which is clearly true. Thus, Proposition
\ref{prop.elkies-prop2} is proved in Case 1.

Let us now consider Case 2. In this case, we have $\operatorname*{rank}\left(
H_{m,n}\left(  x\right)  \right)  \leq m$. Also, the matrix $H_{m-1,n+1}%
\left(  x\right)  $ has $m$ rows; thus, $\operatorname*{rank}\left(
H_{m-1,n+1}\left(  x\right)  \right)  \leq m\leq n+1$. Therefore, Lemma
\ref{lem.hank-ranks.3} (applied to $m+n$, $m$ and $n+1$ instead of $n$, $p$
and $q$) yields
\begin{equation}
\operatorname*{rank}\left(  H_{m,n}\left(  x\right)  \right)
=\operatorname*{rank}\left(  H_{m-1,n+1}\left(  x\right)  \right)  .
\label{pf.prop.elkies-prop2.IB.5}%
\end{equation}

Now, Proposition \ref{prop.iverson.rollcall} shows that $\sum_{v\in
F^{1\times\left(  m+1\right)  }}\left[  v\ H_{m,n}\left(  x\right)  =0\right]
$ is the number of all $v\in F^{1\times\left(  m+1\right)  }$ satisfying
$v\ H_{m,n}\left(  x\right)  =0$. In other words, $\sum_{v\in F^{1\times
\left(  m+1\right)  }}\left[  v\ H_{m,n}\left(  x\right)  =0\right]  $ is the
size of the left kernel\footnote{The \textit{left kernel} of an $s\times
t$-matrix $A\in F^{s\times t}$ is defined to be the set of all row vectors
$v\in F^{1\times s}$ satisfying $vA=0$. This is a vector subspace of
$F^{1\times s}$.} of the matrix $H_{m,n}\left(  x\right)  $. But the dimension
of this left kernel is $\left(  m+1\right)  -\operatorname*{rank}\left(
H_{m,n}\left(  x\right)  \right)  $ (by the rank-nullity theorem\footnote{The
\textit{rank-nullity theorem} (in the form we are using it here) says that the
dimension of the left kernel of a matrix $A\in F^{s\times t}$ equals
$s-\operatorname*{rank}A$.}); hence, the size of this left kernel is
$q^{\left(  m+1\right)  -\operatorname*{rank}\left(  H_{m,n}\left(  x\right)
\right)  }$. Thus,%
\begin{equation}
\sum_{v\in F^{1\times\left(  m+1\right)  }}\left[  v\ H_{m,n}\left(  x\right)
=0\right]  =q^{\left(  m+1\right)  -\operatorname*{rank}\left(  H_{m,n}\left(
x\right)  \right)  }. \label{pf.prop.elkies-prop2.IB.6a}%
\end{equation}
The same reasoning shows that%
\begin{equation}
\sum_{v\in F^{1\times m}}\left[  v\ H_{m-1,n+1}\left(  x\right)  =0\right]
=q^{m-\operatorname*{rank}\left(  H_{m-1,n+1}\left(  x\right)  \right)  }.
\label{pf.prop.elkies-prop2.IB.6b}%
\end{equation}

Now, (\ref{pf.prop.elkies-prop2.IB.5}) yields%
\[
q^{\left(  m+1\right)  -\operatorname*{rank}\left(  H_{m,n}\left(  x\right)
\right)  }=q^{\left(  m+1\right)  -\operatorname*{rank}\left(  H_{m-1,n+1}%
\left(  x\right)  \right)  }=q\cdot q^{m-\operatorname*{rank}\left(
H_{m-1,n+1}\left(  x\right)  \right)  }.
\]
In view of%
\begin{align*}
&  q^{\left(  m+1\right)  -\operatorname*{rank}\left(  H_{m,n}\left(
x\right)  \right)  }\\
&  =\sum_{v\in F^{1\times\left(  m+1\right)  }}\left[  v\ H_{m,n}\left(
x\right)  =0\right]  \ \ \ \ \ \ \ \ \ \ \left(  \text{by
(\ref{pf.prop.elkies-prop2.IB.6a})}\right) \\
&  =1+\sum_{\substack{v\in F^{1\times\left(  m+1\right)  };\\v\neq0}}\left[
v\ H_{m,n}\left(  x\right)  =0\right]  \ \ \ \ \ \ \ \ \ \ \left(  \text{since
}0\ H_{m,n}\left(  x\right)  =0\right)
\end{align*}
and%
\begin{align*}
&  q^{m-\operatorname*{rank}\left(  H_{m-1,n+1}\left(  x\right)  \right)  }\\
&  =\sum_{v\in F^{1\times m}}\left[  v\ H_{m-1,n+1}\left(  x\right)
=0\right]  \ \ \ \ \ \ \ \ \ \ \left(  \text{by
(\ref{pf.prop.elkies-prop2.IB.6b})}\right) \\
&  =1+\sum_{\substack{v\in F^{1\times m};\\v\neq0}}\left[  v\ H_{m-1,n+1}%
\left(  x\right)  =0\right]  \ \ \ \ \ \ \ \ \ \ \left(  \text{since
}0\ H_{m-1,n+1}\left(  x\right)  =0\right)  ,
\end{align*}
we can rewrite this as%
\[
1+\sum_{\substack{v\in F^{1\times\left(  m+1\right)  };\\v\neq0}}\left[
v\ H_{m,n}\left(  x\right)  =0\right]  =q\cdot\left(  1+\sum_{\substack{v\in
F^{1\times m};\\v\neq0}}\left[  v\ H_{m-1,n+1}\left(  x\right)  =0\right]
\right)  .
\]
In other words,%
\[
\sum_{\substack{v\in F^{1\times\left(  m+1\right)  };\\v\neq0}}\left[
v\ H_{m,n}\left(  x\right)  =0\right]  -q\sum_{\substack{v\in F^{1\times
m};\\v\neq0}}\left[  v\ H_{m-1,n+1}\left(  x\right)  =0\right]  =q-1.
\]
Comparing this with%
\[
\left(  q-1\right)  \cdot\underbrace{\left[  \operatorname*{rank}\left(
H_{m,n}\left(  x\right)  \right)  \leq m\right]  }_{\substack{=1\\\text{(since
}\operatorname*{rank}\left(  H_{m,n}\left(  x\right)  \right)  \leq m\text{)}%
}}=q-1,
\]
we obtain precisely the claim of Proposition \ref{prop.elkies-prop2}. Thus,
Proposition \ref{prop.elkies-prop2} is proved in Case 2.

We have now proved Proposition \ref{prop.elkies-prop2} in both Cases 1 and 2.
Thus, Proposition \ref{prop.elkies-prop2} always holds.
\end{proof}

\subsection{Sums over $x$ for fixed $v$}

We need another definition. Namely, if $m\in\mathbb{N}$, and if $v=\left(
v_{0},v_{1},\ldots,v_{m}\right)  \in F^{1\times\left(  m+1\right)  }$ is a row
vector of size $m+1$, then $\operatorname*{last}v$ will denote $v_{m}$ (that
is, the last entry of $v$). There is a bijection%
\begin{align*}
R:\left\{  v\in F^{1\times\left(  m+1\right)  }\mid\operatorname*{last}%
v=0\right\}   &  \rightarrow F^{1\times m},\\
\left(  v_{0},v_{1},\ldots,v_{m}\right)   &  \mapsto\left(  v_{0},v_{1}%
,\ldots,v_{m-1}\right)  .
\end{align*}
Its inverse map $R^{-1}$ sends each row vector $\left(  v_{0},v_{1}%
,\ldots,v_{m-1}\right)  \in F^{1\times m}$ to the row vector $\left(
v_{0},v_{1},\ldots,v_{m-1},0\right)  $.

\begin{lemma}
\label{lem.lastnon0}Let $k,m,n\in\mathbb{N}$ satisfy $k\leq m$. Let $v\in
F^{1\times\left(  m+1\right)  }$ be a row vector of size $m+1$ such that
$\operatorname*{last}v\neq0$. Fix any $k$-tuple $a\in F^{k}$. Then,%
\[
\sum_{\substack{x\in F^{m+n+1};\\x_{\left[  0,k\right)  }=a}}\left[
v\ H_{m,n}\left(  x\right)  =0\right]  =q^{m-k}.
\]

\end{lemma}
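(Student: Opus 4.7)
The plan is to interpret the matrix equation $v\,H_{m,n}(x)=0$ as an order-$m$ linear recurrence on the sequence $(x_0, x_1, \ldots, x_{m+n})$, exploit that the leading coefficient $v_m = \operatorname{last} v$ is invertible in $F$, and thereby parametrize the solutions by their first $m$ entries. Imposing $x_{[0,k)}=a$ will then fix $k$ of these free entries and leave exactly $m-k$ free, yielding a count of $q^{m-k}$.

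In detail, writing $v = (v_0, v_1, \ldots, v_m)$ with $v_m \neq 0$, I would unpack the matrix equation $v\,H_{m,n}(x) = 0$ coordinatewise. The $j$-th resulting scalar equation (for $j = 0, 1, \ldots, n$) reads $\sum_{i=0}^{m} v_i\, x_{i+j} = 0$, and since $v_m$ is invertible one may solve for $x_{j+m}$ as an $F$-linear combination of the previous $m$ entries $x_j, x_{j+1}, \ldots, x_{j+m-1}$. Running these equations in order $j = 0, 1, \ldots, n$: once $x_0, x_1, \ldots, x_{m-1}$ are prescribed, the remaining entries $x_m, x_{m+1}, \ldots, x_{m+n}$ are forced one at a time; conversely, any choice of $(x_0, \ldots, x_{m-1}) \in F^m$ extends uniquely to an $(m+n+1)$-tuple $x$ satisfying every one of the $n+1$ equations. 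This furnishes a bijection
\[
F^m \;\longleftrightarrow\; \{\,x \in F^{m+n+1} \;:\; v\,H_{m,n}(x) = 0\,\}.
\]

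Finally, imposing the additional constraint $x_{[0,k)} = a$ fixes the first $k$ of the free coordinates (legitimate because $k \leq m$) and leaves $x_k, x_{k+1}, \ldots, x_{m-1}$ still ranging independently over $F$. The above bijection therefore restricts to a bijection between $F^{m-k}$ and the set $\{\,x \in F^{m+n+1} \;:\; x_{[0,k)} = a \text{ and } v\,H_{m,n}(x) = 0\,\}$, whose cardinality is $q^{m-k}$. Invoking Proposition \ref{prop.iverson.rollcall} to rewrite the Iverson sum as this cardinality finishes the argument.

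The main obstacle is essentially nonexistent — this is a clean linear-algebra argument that only uses $v_m \neq 0$ and the condition $k \leq m$. The one place where I would be careful is the index bookkeeping: running the recurrence over $j = 0, 1, \ldots, n$ produces precisely the entries $x_m, x_{m+1}, \ldots, x_{m+n}$, filling out the full $(m+n+1)$-tuple without over- or under-determination. It is also worth noting that the count is independent of $n$, which reflects the fact that extending the recurrence by additional steps adds no freedom.
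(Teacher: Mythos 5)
Your proposal is correct and matches the paper's own proof essentially verbatim: both unpack $v\,H_{m,n}(x)=0$ into the $n+1$ scalar equations $\sum_{i=0}^{m} v_i x_{i+j}=0$, use $v_m\neq 0$ to solve recursively for $x_m,\ldots,x_{m+n}$ in terms of the first $m$ entries, and then count the $m-k$ remaining free entries after fixing $x_{[0,k)}=a$. Nothing to add.
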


\begin{proof}
[Proof of Lemma \ref{lem.lastnon0}.]Proposition \ref{prop.iverson.rollcall}
shows that $\sum_{\substack{x\in F^{m+n+1};\\x_{\left[  0,k\right)  }%
=a}}\left[  v\ H_{m,n}\left(  x\right)  =0\right]  $ is the number of all
$x\in F^{m+n+1}$ satisfying $x_{\left[  0,k\right)  }=a$ and $v\ H_{m,n}%
\left(  x\right)  =0$. Thus, we must prove that this number is $q^{m-k}$.

Write $v$ and $a$ as $v=\left(  v_{0},v_{1},\ldots,v_{m}\right)  $ and
$a=\left(  a_{0},a_{1},\ldots,a_{k-1}\right)  $, respectively. Thus,
$\operatorname*{last}v=v_{m}$, so that $v_{m}=\operatorname*{last}v\neq0$.

Now, we are looking for an $x=\left(  x_{0},x_{1},\ldots,x_{m+n}\right)  \in
F^{m+n+1}$ satisfying $x_{\left[  0,k\right)  }=a$ and $v\ H_{m,n}\left(
x\right)  =0$. The condition $x_{\left[  0,k\right)  }=a$ says that the first
$k$ entries of $x$ equal the respective entries of $a$; that is, $x_{i}=a_{i}$
for each $i\in\left\{  0,1,\ldots,k-1\right\}  $. Thus, $x_{0},x_{1}%
,\ldots,x_{k-1}$ are uniquely determined. The condition $v\ H_{m,n}\left(
x\right)  =0$ is equivalent to $x$ satisfying the following system of linear
equations:%
\begin{equation}
\left\{
\begin{array}
[c]{c}%
v_{0}x_{0}+v_{1}x_{1}+\cdots+v_{m}x_{m}=0;\\
v_{0}x_{1}+v_{1}x_{2}+\cdots+v_{m}x_{m+1}=0;\\
v_{0}x_{2}+v_{1}x_{3}+\cdots+v_{m}x_{m+2}=0;\\
\cdots;\\
v_{0}x_{n}+v_{1}x_{n+1}+\cdots+v_{m}x_{m+n}=0.
\end{array}
\right.  \label{pf.lem.lastnon0.sys}%
\end{equation}
Since $v_{m}\neq0$, this latter system of equations can be uniquely solved for
the unknowns $x_{m},x_{m+1},\ldots,x_{m+n}$ (by recursive substitution) when
the $m$ entries $x_{0},x_{1},\ldots,x_{m-1}$ are given. Hence, each $\left(
m+n+1\right)  $-tuple $x=\left(  x_{0},x_{1},\ldots,x_{m+n}\right)  \in
F^{m+n+1}$ satisfying $x_{\left[  0,k\right)  }=a$ and $v\ H_{m,n}\left(
x\right)  =0$ can be constructed as follows:

\begin{itemize}
\item First, we set $x_{i}=a_{i}$ for each $i\in\left\{  0,1,\ldots
,k-1\right\}  $. This determines the first $k$ entries $x_{0},x_{1}%
,\ldots,x_{k-1}$ of $x$.

\item Then, we choose arbitrary values for the next $m-k$ entries
$x_{k},x_{k+1},\ldots,x_{m-1}$.

\item Finally, we uniquely determine the remaining entries $x_{m}%
,x_{m+1},\ldots,x_{m+n}$ by solving the system (\ref{pf.lem.lastnon0.sys}).
\end{itemize}

Clearly, the number of ways to perform this construction is $q^{m-k}$ (since
there are $\left\vert F\right\vert =q$ many options for each of the $m-k$
entries $x_{k},x_{k+1},\ldots,x_{m-1}$). Thus, the number of all $x\in
F^{m+n+1}$ satisfying $x_{\left[  0,k\right)  }=a$ and $v\ H_{m,n}\left(
x\right)  =0$ is $q^{m-k}$. This proves Lemma \ref{lem.lastnon0}.
\end{proof}

\begin{lemma}
\label{lem.last0}Let $k,m,n\in\mathbb{N}$ satisfy $k\leq n+1$. Let $v\in
F^{1\times\left(  m+1\right)  }$ be a nonzero row vector of size $m+1$ such
that $\operatorname*{last}v=0$. Fix any $k$-tuple $a\in F^{k}$. Then,%
\begin{align}
&  \sum_{\substack{x\in F^{m+n+1};\\x_{\left[  0,k\right)  }=a}}\left[
v\ H_{m,n}\left(  x\right)  =0\right] \nonumber\\
&  =q\sum_{\substack{x\in F^{m+n+1};\\x_{\left[  0,k\right)  }=a}}\left[
R\left(  v\right)  \ H_{m-1,n+1}\left(  x\right)  =0\right]  .
\label{eq.lem.last0.eq}%
\end{align}

\end{lemma}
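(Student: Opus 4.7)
The plan is to reinterpret both sides as the cardinalities of two nested sets of tuples and to exhibit a free additive $F$-action on the larger set whose orbits each have size $q$ and meet the smaller set in exactly one element. To set up, I write $v = (v_0, v_1, \ldots, v_{m-1}, 0)$ and put $w := R(v) = (v_0, \ldots, v_{m-1})$; this $w$ is nonzero because $v$ is. Since $v_m = 0$, the row vectors $v\, H_{m,n}(x)$ and $w\, H_{m-1,n}(x)$ are identical, so the LHS of (\ref{eq.lem.last0.eq}) equals $|X_1|$, where
\[
X_1 := \{x \in F^{m+n+1} : x_{[0,k)} = a \text{ and } w\, H_{m-1,n}(x) = 0\}.
\]
The condition $w\, H_{m-1,n+1}(x) = 0$ is just $w\, H_{m-1,n}(x) = 0$ together with the single extra equation
\[
L(x) := \sum_{i=0}^{m-1} v_i\, x_{i+n+1} = 0
\]
(the last entry of $w\, H_{m-1,n+1}(x)$). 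Hence the RHS equals $q \cdot |X_2|$ with $X_2 := \{x \in X_1 : L(x) = 0\}$, and the goal reduces to $|X_1| = q\, |X_2|$.

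To prove this equality I would single out the index $j := \ell + n + 1$, where $\ell := \max\{i : v_i \neq 0\}$; this is well-defined because $w \neq 0$, and it lies in $\{0, 1, \ldots, m-1\}$. Three short observations are then relevant. First, $j \leq m + n$ since $\ell \leq m - 1$, so $x_j$ is genuinely an entry of $x$. Second, $j \geq n + 1 \geq k$ by hypothesis, so $x_j$ lies strictly past the prefix pinned down by $x_{[0,k)} = a$. Third, $x_j$ appears in none of the equations $w\, H_{m-1,n}(x) = 0$: the $j'$-th such equation reads $\sum_{i=0}^{\ell} v_i\, x_{i + j'} = 0$ for $j' \in \{0, 1, \ldots, n\}$, and its highest-index variable is $x_{\ell + n}$, which is strictly less than $x_j$.

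Together these three facts say that the additive group $F$ acts on $X_1$ by $t \cdot x := x + t\, e_j$ (translating only the $j$-th coordinate of $x$), that this action is free, and hence that its orbits all have size exactly $q$. Along any orbit we compute $L(x + t\, e_j) = L(x) + t\, v_\ell$, and since $v_\ell \neq 0$ the value $L(x + t\, e_j)$ ranges bijectively over $F$ as $t$ does; each orbit therefore contains exactly one element of $X_2$. Summing over orbits yields $|X_1| = q\, |X_2|$, as required.

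The main obstacle I anticipate is the third observation, whose validity rests on the specific Hankel geometry combined with the assumption $v_m = 0$ (which lops one column off the rightmost support of the defining equations and frees up the coordinate $x_j$). The degenerate case $X_1 = \emptyset$, which can arise when the fixed prefix $a$ is incompatible with the recurrence imposed by $w$ (for instance when $k > \ell$ and the prescribed values $a_\ell, \ldots, a_{k-1}$ disagree with those dictated by the recurrence), needs no separate treatment: then $X_2 = \emptyset$ as well and both sides vanish.
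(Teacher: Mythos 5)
Your proof is correct and follows essentially the same route as the paper's: both single out the coordinate $x_{\ell+n+1}$ (the paper calls your $\ell$ by the name $j$), observe that it is unconstrained by the smaller system and by the prefix condition (using $k\leq n+1$) but uniquely determined by the one extra equation of the larger system, and conclude that the larger count is $q$ times the smaller. Your orbit-counting formulation via the free $F$-action is just a repackaging of the paper's explicit bijection between $F\times\left\{ \text{strongly nice tuples}\right\}$ and $\left\{ \text{weakly nice tuples}\right\}$.
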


\begin{proof}
[Proof of Lemma \ref{lem.last0}.]The sum on the left hand side of
(\ref{eq.lem.last0.eq}) is the number of all $\left(  m+n+1\right)  $-tuples
$x\in F^{m+n+1}$ satisfying $x_{\left[  0,k\right)  }=a$ and $v\ H_{m,n}%
\left(  x\right)  =0$ (because of Proposition \ref{prop.iverson.rollcall}).
Let us refer to such $\left(  m+n+1\right)  $-tuples $x$ as \textit{weakly
nice tuples}.

The sum on the right hand side of (\ref{eq.lem.last0.eq}) is the number of all
$\left(  m+n+1\right)  $-tuples $x\in F^{m+n+1}$ satisfying $x_{\left[
0,k\right)  }=a$ and $R\left(  v\right)  \ H_{m-1,n+1}\left(  x\right)  =0$
(because of Proposition \ref{prop.iverson.rollcall}). Let us refer to such
$\left(  m+n+1\right)  $-tuples $x$ as \textit{strongly nice tuples}.

We thus need to prove that the number of weakly nice tuples equals $q$ times
the number of strongly nice tuples.

We shall achieve this by constructing a bijection%
\[
\left\{  \text{weakly nice tuples}\right\}  \rightarrow F\times\left\{
\text{strongly nice tuples}\right\}  .
\]

Indeed, let us unravel the definitions of weakly and strongly nice tuples.

Write $v$ and $a$ as $v=\left(  v_{0},v_{1},\ldots,v_{m}\right)  $ and
$a=\left(  a_{0},a_{1},\ldots,a_{k-1}\right)  $, respectively. Thus,
$\operatorname*{last}v=v_{m}$, so that $v_{m}=\operatorname*{last}v=0$.
Consider the \textbf{largest} $j\in\left\{  0,1,\ldots,m\right\}  $ satisfying
$v_{j}\neq0$. (This exists, since $v$ is nonzero.) Thus, $v_{j}\neq0$ but
$v_{j+1}=v_{j+2}=\cdots=v_{m}=0$. Also, the definition of $R$ yields $R\left(
v\right)  =\left(  v_{0},v_{1},\ldots,v_{m-1}\right)  $.

We have $j\neq m$ (since $v_{j}\neq0$ but $v_{m}=0$). Thus, $j\leq m-1$.
Furthermore,
\[
\underbrace{j}_{\geq0}+n+1\geq n+1>n\geq k-1\ \ \ \ \ \ \ \ \ \ \left(
\text{since }k\leq n+1\right)  .
\]

The weakly nice tuples are the $\left(  m+n+1\right)  $-tuples $x=\left(
x_{0},x_{1},\ldots,x_{m+n}\right)  \in F^{m+n+1}$ satisfying%
\begin{equation}
x_{i}=a_{i}\ \ \ \ \ \ \ \ \ \ \text{for each }i\in\left\{  0,1,\ldots
,k-1\right\}  \label{pf.lem.last0.weakly.ai}%
\end{equation}
as well as%
\begin{equation}
\left\{
\begin{array}
[c]{c}%
v_{0}x_{0}+v_{1}x_{1}+\cdots+v_{m}x_{m}=0;\\
v_{0}x_{1}+v_{1}x_{2}+\cdots+v_{m}x_{m+1}=0;\\
v_{0}x_{2}+v_{1}x_{3}+\cdots+v_{m}x_{m+2}=0;\\
\cdots;\\
v_{0}x_{n}+v_{1}x_{n+1}+\cdots+v_{m}x_{m+n}=0
\end{array}
\right.  \label{pf.lem.last0.weakly.sys}%
\end{equation}
(because the condition \textquotedblleft$x_{\left[  0,k\right)  }%
=a$\textquotedblright\ is equivalent to (\ref{pf.lem.last0.weakly.ai}),
whereas the condition \textquotedblleft$v\ H_{m,n}\left(  x\right)
=0$\textquotedblright\ is equivalent to (\ref{pf.lem.last0.weakly.sys})). In
view of $v_{j+1}=v_{j+2}=\cdots=v_{m}=0$, we can rewrite this as follows: The
weakly nice tuples are the $\left(  m+n+1\right)  $-tuples $x=\left(
x_{0},x_{1},\ldots,x_{m+n}\right)  \in F^{m+n+1}$ satisfying%
\[
x_{i}=a_{i}\ \ \ \ \ \ \ \ \ \ \text{for each }i\in\left\{  0,1,\ldots
,k-1\right\}
\]
as well as%
\begin{equation}
\left\{
\begin{array}
[c]{c}%
v_{0}x_{0}+v_{1}x_{1}+\cdots+v_{j}x_{j}=0;\\
v_{0}x_{1}+v_{1}x_{2}+\cdots+v_{j}x_{j+1}=0;\\
v_{0}x_{2}+v_{1}x_{3}+\cdots+v_{j}x_{j+2}=0;\\
\cdots;\\
v_{0}x_{n}+v_{1}x_{n+1}+\cdots+v_{j}x_{j+n}=0.
\end{array}
\right.  \label{pf.lem.last0.sys1}%
\end{equation}

A similar argument (using $R\left(  v\right)  =\left(  v_{0},v_{1}%
,\ldots,v_{m-1}\right)  $) shows that the strongly nice tuples are the
$\left(  m+n+1\right)  $-tuples $x=\left(  x_{0},x_{1},\ldots,x_{m+n}\right)
\in F^{m+n+1}$ satisfying%
\[
x_{i}=a_{i}\ \ \ \ \ \ \ \ \ \ \text{for each }i\in\left\{  0,1,\ldots
,k-1\right\}
\]
as well as%
\begin{equation}
\left\{
\begin{array}
[c]{c}%
v_{0}x_{0}+v_{1}x_{1}+\cdots+v_{j}x_{j}=0;\\
v_{0}x_{1}+v_{1}x_{2}+\cdots+v_{j}x_{j+1}=0;\\
v_{0}x_{2}+v_{1}x_{3}+\cdots+v_{j}x_{j+2}=0;\\
\cdots;\\
v_{0}x_{n}+v_{1}x_{n+1}+\cdots+v_{j}x_{j+n}=0;\\
v_{0}x_{n+1}+v_{1}x_{n+2}+\cdots+v_{j}x_{j+n+1}=0.
\end{array}
\right.  \label{pf.lem.last0.sys2}%
\end{equation}

These characterizations of weakly and strongly nice tuples are very similar:
The system (\ref{pf.lem.last0.sys2}) consists of all the equations of
(\ref{pf.lem.last0.sys1}) as well as one extra equation
\begin{equation}
v_{0}x_{n+1}+v_{1}x_{n+2}+\cdots+v_{j}x_{j+n+1}=0.
\label{pf.lem.last0.extra-eqn}%
\end{equation}
This latter equation (\ref{pf.lem.last0.extra-eqn}) uniquely determines the
entry $x_{j+n+1}$ in terms of the other entries of $x$ (since $v_{j}\neq0$),
whereas $x_{j+n+1}$ is entirely unconstrained by the system
(\ref{pf.lem.last0.sys1}). Thus, the entry $x_{j+n+1}$ is uniquely determined
(in terms of the other entries of $x$) in a strongly nice tuple $x$, while
being entirely unconstrained in a weakly nice tuple\footnote{Here we are using
the fact that the \textquotedblleft$x_{i}=a_{i}$ for each $i\in\left\{
0,1,\ldots,k-1\right\}  $\textquotedblright\ conditions don't constrain
$x_{j+n+1}$ either (since $j+n+1>k-1$).}. Informally speaking, this shows that
a weakly nice tuple has \textquotedblleft one more degree of
freedom\textquotedblright\ than a strongly nice tuple (and this degree of
freedom is the entry $x_{j+n+1}$, which can take $q$ possible values in a
weakly nice tuple). This easily entails that the number of weakly nice tuples
equals $q$ times the number of strongly nice tuples\footnote{Here is a
rigorous way to show this: Consider the map%
\begin{align*}
\alpha:F\times\left\{  \text{strongly nice tuples}\right\}   &  \rightarrow
\left\{  \text{weakly nice tuples}\right\}  ,\\
\left(  y,\left(  x_{0},x_{1},\ldots,x_{m+n}\right)  \right)   &
\mapsto\left(  x_{0},x_{1},\ldots,x_{j+n},y,x_{j+n+2},x_{j+n+3},\ldots
,x_{m+n}\right)  ,
\end{align*}
which simply replaces the entry $x_{j+n+1}$ of the strongly nice tuple
$\left(  x_{0},x_{1},\ldots,x_{m+n}\right)  $ by the element $y$. Consider the
map
\begin{align*}
\beta:\left\{  \text{weakly nice tuples}\right\}   &  \rightarrow
F\times\left\{  \text{strongly nice tuples}\right\}  ,\\
\left(  x_{0},x_{1},\ldots,x_{m+n}\right)   &  \mapsto\left(  x_{j+n+1}%
,\left(  x_{0},x_{1},\ldots,x_{j+n},z,x_{j+n+2},x_{j+n+3},\ldots
,x_{m+n}\right)  \right)  ,
\end{align*}
where $z$ is the unique element of $F$ that would make the equation
(\ref{pf.lem.last0.extra-eqn}) valid when it is substituted for $x_{j+n+1}$
(that is, explicitly, $z$ is given by the formula $z=-\left(  v_{0}%
x_{n+1}+v_{1}x_{n+2}+\cdots+v_{j-1}x_{j+n}\right)  /v_{j}$). Our above
characterizations of weakly nice and strongly nice tuples show that these two
maps $\alpha$ and $\beta$ are mutually inverse. Hence, $\alpha$ and $\beta$
are bijections. Thus,%
\begin{align*}
\left\vert \left\{  \text{weakly nice tuples}\right\}  \right\vert  &
=\left\vert F\times\left\{  \text{strongly nice tuples}\right\}  \right\vert
\\
&  =q\cdot\left\vert \left\{  \text{strongly nice tuples}\right\}  \right\vert
.
\end{align*}
In other words, the number of weakly nice tuples equals $q$ times the number
of strongly nice tuples.}. This proves Lemma \ref{lem.last0}.
\end{proof}

Lemma \ref{lem.last0} and Lemma \ref{lem.lastnon0} combined lead to the following:

\begin{lemma}
\label{lem.sumlast}Let $k,m,n\in\mathbb{N}$ satisfy $k\leq m$ and $k\leq n+1$.
Fix any $k$-tuple $a\in F^{k}$. Then,%
\begin{align*}
&  \sum_{\substack{x\in F^{m+n+1};\\x_{\left[  0,k\right)  }=a}}\ \ \sum
_{\substack{v\in F^{1\times\left(  m+1\right)  };\\v\neq0}}\left[
v\ H_{m,n}\left(  x\right)  =0\right]  -q\sum_{\substack{x\in F^{m+n+1}%
;\\x_{\left[  0,k\right)  }=a}}\ \ \sum_{\substack{v\in F^{1\times m}%
;\\v\neq0}}\left[  v\ H_{m-1,n+1}\left(  x\right)  =0\right] \\
&  =\left(  q-1\right)  q^{2m-k}.
\end{align*}

\end{lemma}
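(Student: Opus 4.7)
The plan is to swap the order of summation in each of the two double sums so that the outer sum is over $v$ and the inner sum is over $x$ (with $x_{[0,k)} = a$), and then to split the first double sum according to whether $\operatorname{last} v \neq 0$ or $\operatorname{last} v = 0$. Lemma \ref{lem.lastnon0} (which applies because $k\leq m$) and Lemma \ref{lem.last0} (which applies because $k\leq n+1$) are tailor-made to handle these two cases.

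More concretely, after swapping, the first double sum becomes
\[
\sum_{\substack{v\in F^{1\times(m+1)};\\ \operatorname{last} v\neq 0}}\ \sum_{\substack{x\in F^{m+n+1};\\ x_{[0,k)}=a}}[v\,H_{m,n}(x)=0] \;+\; \sum_{\substack{v\in F^{1\times(m+1)};\\ v\neq 0,\ \operatorname{last} v=0}}\ \sum_{\substack{x\in F^{m+n+1};\\ x_{[0,k)}=a}}[v\,H_{m,n}(x)=0].
\]
In the first piece, Lemma \ref{lem.lastnon0} evaluates the inner sum as $q^{m-k}$, and the number of $v\in F^{1\times(m+1)}$ with $\operatorname{last} v\neq 0$ equals $(q-1)q^{m}$, so this piece contributes exactly $(q-1)q^{2m-k}$, which is the target answer.

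In the second piece, Lemma \ref{lem.last0} rewrites the inner sum as $q\sum_{x:\,x_{[0,k)}=a}[R(v)\,H_{m-1,n+1}(x)=0]$. Using the bijection $R$ between $\{v\in F^{1\times(m+1)}:\operatorname{last} v=0\}$ and $F^{1\times m}$ (which sends nonzero vectors to nonzero vectors, since $R^{-1}$ just appends a $0$), this piece becomes $q\sum_{w\in F^{1\times m},\ w\neq 0}\sum_{x:\,x_{[0,k)}=a}[w\,H_{m-1,n+1}(x)=0]$. Swapping the order of summation in this expression yields precisely the subtracted term on the left-hand side of the lemma, and the two cancel.

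The main obstacle is essentially bookkeeping rather than substance: one has to verify that the indexing bijection $R$ restricts correctly to the nonzero vectors and that the conditions $k\leq m$ and $k\leq n+1$ needed to apply the preceding two lemmas are exactly the hypotheses of the current statement. Once these are in place, the computation collapses to $(q-1)q^{m}\cdot q^{m-k}=(q-1)q^{2m-k}$ and the proof is complete.
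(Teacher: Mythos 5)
Your proposal is correct and follows essentially the same route as the paper's proof: both arguments swap the order of summation, evaluate the inner sum via Lemma \ref{lem.lastnon0} when $\operatorname{last}v\neq0$, and use Lemma \ref{lem.last0} together with the restriction of the bijection $R$ to nonzero vectors to identify the $\operatorname{last}v=0$ contribution with the subtracted term. The only cosmetic difference is that the paper subtracts the two sums first and then recognizes the difference as the sum over $v$ with $\operatorname{last}v\neq0$, whereas you split first and cancel afterwards; the computation $(q-1)q^{m}\cdot q^{m-k}=(q-1)q^{2m-k}$ is identical.
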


\begin{proof}
[Proof of Lemma \ref{lem.sumlast}.]We first observe that%
\begin{align}
&  \left(  \text{the number of all vectors }v\in F^{1\times\left(  m+1\right)
}\text{ satisfying }\operatorname*{last}v\neq0\right)  \nonumber\\
&  =\left(  q-1\right)  q^{m}\label{pf.lem.sumlast.num-last-nonz}%
\end{align}
(since a vector $v\in F^{1\times\left(  m+1\right)  }$ satisfying
$\operatorname*{last}v\neq0$ can be constructed by choosing its last entry
from the $\left(  q-1\right)  $-element set $F\setminus\left\{  0\right\}  $
and then choosing its remaining $m$ entries from the $q$-element set $F$).

For any row vector $v\in F^{1\times\left(  m+1\right)  }$, we define a number%
\begin{equation}
\chi_{v}:=\sum_{\substack{x\in F^{m+n+1};\\x_{\left[  0,k\right)  }=a}}\left[
v\ H_{m,n}\left(  x\right)  =0\right]  .\label{pf.lem.sumlast.chiv=}%
\end{equation}
Thus, if $v\in F^{1\times\left(  m+1\right)  }$ is a row vector satisfying
$\operatorname*{last}v\neq0$, then%
\begin{equation}
\chi_{v}=\sum_{\substack{x\in F^{m+n+1};\\x_{\left[  0,k\right)  }=a}}\left[
v\ H_{m,n}\left(  x\right)  =0\right]  =q^{m-k}%
\label{pf.lem.sumlast.chiv=qm-k}%
\end{equation}
(by Lemma \ref{lem.lastnon0}).

Recall that $R:\left\{  v\in F^{1\times\left(  m+1\right)  }\mid
\operatorname*{last}v=0\right\}  \rightarrow F^{1\times m}$ is a bijection.
This bijection sends $0$ to $0$, and therefore restricts to a bijection%
\begin{align*}
\left\{  v\in F^{1\times\left(  m+1\right)  }\mid\operatorname*{last}v=0\text{
and }v\neq0\right\}   &  \rightarrow\left\{  v\in F^{1\times m}\mid
v\neq0\right\}  ,\\
v &  \mapsto R\left(  v\right)  .
\end{align*}
Hence, given any $x\in F^{m+n+1}$, we can substitute $R\left(  v\right)  $ for
$v$ in the sum \newline$\sum_{\substack{v\in F^{1\times m};\\v\neq0}}\left[
v\ H_{m-1,n+1}\left(  x\right)  =0\right]  $, and thus obtain%
\[
\sum_{\substack{v\in F^{1\times m};\\v\neq0}}\left[  v\ H_{m-1,n+1}\left(
x\right)  =0\right]  =\sum_{\substack{v\in F^{1\times\left(  m+1\right)
};\\\operatorname*{last}v=0;\\v\neq0}}\left[  R\left(  v\right)
\ H_{m-1,n+1}\left(  x\right)  =0\right]  .
\]
Thus,%
\begin{align}
& q\sum_{\substack{x\in F^{m+n+1};\\x_{\left[  0,k\right)  }=a}}\ \ \sum
_{\substack{v\in F^{1\times m};\\v\neq0}}\left[  v\ H_{m-1,n+1}\left(
x\right)  =0\right]  \nonumber\\
& =q\sum_{\substack{x\in F^{m+n+1};\\x_{\left[  0,k\right)  }=a}%
}\ \ \sum_{\substack{v\in F^{1\times\left(  m+1\right)  }%
;\\\operatorname*{last}v=0;\\v\neq0}}\left[  R\left(  v\right)  \ H_{m-1,n+1}%
\left(  x\right)  =0\right]  \nonumber\\
& =\sum_{\substack{v\in F^{1\times\left(  m+1\right)  };\\\operatorname*{last}%
v=0;\\v\neq0}}\underbrace{q\sum_{\substack{x\in F^{m+n+1};\\x_{\left[
0,k\right)  }=a}}\left[  R\left(  v\right)  \ H_{m-1,n+1}\left(  x\right)
=0\right]  }_{\substack{=\sum_{\substack{x\in F^{m+n+1};\\x_{\left[
0,k\right)  }=a}}\left[  v\ H_{m,n}\left(  x\right)  =0\right]  \\\text{(by
Lemma \ref{lem.last0})}}}\nonumber\\
& =\sum_{\substack{v\in F^{1\times\left(  m+1\right)  };\\\operatorname*{last}%
v=0;\\v\neq0}}\ \ \underbrace{\sum_{\substack{x\in F^{m+n+1};\\x_{\left[
0,k\right)  }=a}}\left[  v\ H_{m,n}\left(  x\right)  =0\right]  }%
_{\substack{=\chi_{v}\\\text{(by (\ref{pf.lem.sumlast.chiv=}))}}}\nonumber\\
& =\sum_{\substack{v\in F^{1\times\left(  m+1\right)  };\\\operatorname*{last}%
v=0;\\v\neq0}}\chi_{v}=\sum_{\substack{v\in F^{1\times\left(  m+1\right)
};\\v\neq0;\\\operatorname*{last}v=0}}\chi_{v}.\label{pf.lem.sumlast.2}%
\end{align}
On the other hand,%
\begin{align}
& \sum_{\substack{x\in F^{m+n+1};\\x_{\left[  0,k\right)  }=a}}\ \ \sum
_{\substack{v\in F^{1\times\left(  m+1\right)  };\\v\neq0}}\left[
v\ H_{m,n}\left(  x\right)  =0\right]  \nonumber\\
& =\sum_{\substack{v\in F^{1\times\left(  m+1\right)  };\\v\neq0}%
}\ \ \underbrace{\sum_{\substack{x\in F^{m+n+1};\\x_{\left[  0,k\right)  }%
=a}}\left[  v\ H_{m,n}\left(  x\right)  =0\right]  }_{\substack{=\chi
_{v}\\\text{(by (\ref{pf.lem.sumlast.chiv=}))}}}\nonumber\\
& =\sum_{\substack{v\in F^{1\times\left(  m+1\right)  };\\v\neq0}}\chi
_{v}.\label{pf.lem.sumlast.1}%
\end{align}
Subtracting the equality (\ref{pf.lem.sumlast.2}) from the equality
(\ref{pf.lem.sumlast.1}), we obtain
\begin{align*}
&  \sum_{\substack{x\in F^{m+n+1};\\x_{\left[  0,k\right)  }=a}}\ \ \sum
_{\substack{v\in F^{1\times\left(  m+1\right)  };\\v\neq0}}\left[
v\ H_{m,n}\left(  x\right)  =0\right]  -q\sum_{\substack{x\in F^{m+n+1}%
;\\x_{\left[  0,k\right)  }=a}}\ \ \sum_{\substack{v\in F^{1\times m}%
;\\v\neq0}}\left[  v\ H_{m-1,n+1}\left(  x\right)  =0\right]  \\
&  =\sum_{\substack{v\in F^{1\times\left(  m+1\right)  };\\v\neq0}}\chi
_{v}-\sum_{\substack{v\in F^{1\times\left(  m+1\right)  };\\v\neq
0;\\\operatorname*{last}v=0}}\chi_{v}=\sum_{\substack{v\in F^{1\times\left(
m+1\right)  };\\v\neq0;\\\operatorname*{last}v\neq0}}\ \ \underbrace{\chi_{v}%
}_{\substack{=q^{m-k}\\\text{(by (\ref{pf.lem.sumlast.chiv=qm-k}))}}}\\
&  \ \ \ \ \ \ \ \ \ \ \ \ \ \ \ \ \ \ \ \ \left(
\begin{array}
[c]{c}%
\text{since }\sum_{\substack{v\in F^{1\times\left(  m+1\right)  };\\v\neq
0}}\rho_{v}-\sum_{\substack{v\in F^{1\times\left(  m+1\right)  }%
;\\v\neq0;\\\operatorname*{last}v=0}}\rho_{v}=\sum_{\substack{v\in
F^{1\times\left(  m+1\right)  };\\v\neq0;\\\operatorname*{last}v\neq0}%
}\rho_{v}\\
\text{for any numbers }\rho_{v}%
\end{array}
\right)  \\
&  =\sum_{\substack{v\in F^{1\times\left(  m+1\right)  };\\v\neq
0;\\\operatorname*{last}v\neq0}}q^{m-k}=\sum_{\substack{v\in F^{1\times\left(
m+1\right)  };\\\operatorname*{last}v\neq0}}q^{m-k}\\
&  \ \ \ \ \ \ \ \ \ \ \ \ \ \ \ \ \ \ \ \ \left(
\begin{array}
[c]{c}%
\text{here, we have removed the condition \textquotedblleft}v\neq
0\text{\textquotedblright\ from under}\\
\text{the summation sign, since any vector }v\in F^{1\times\left(  m+1\right)
}\\
\text{satisfying }\operatorname*{last}v\neq0\text{ automatically satisfies
}v\neq0
\end{array}
\right)  \\
&  =\underbrace{\left(  \text{the number of all vectors }v\in F^{1\times
\left(  m+1\right)  }\text{ satisfying }\operatorname*{last}v\neq0\right)
}_{\substack{=\left(  q-1\right)  q^{m}\\\text{(by
(\ref{pf.lem.sumlast.num-last-nonz}))}}}\cdot q^{m-k}\\
&  =\left(  q-1\right)  \underbrace{q^{m}\cdot q^{m-k}}_{=q^{2m-k}}=\left(
q-1\right)  q^{2m-k}.
\end{align*}
This proves Lemma \ref{lem.sumlast}.
\end{proof}

\subsection{Theorem \ref{thm.hankel.mn<=rk} for $r=m$}

Before we prove Theorem \ref{thm.hankel.mn<=rk} in full generality, let us
first show it in the particular case when $r=m$:

\begin{lemma}
\label{lem.hankel.case-r=m}Let $k,m,n\in\mathbb{N}$ satisfy $k\leq m\leq n+1$.
Fix any $k$-tuple $a\in F^{k}$. The number of $\left(  m+n+1\right)  $-tuples
$x\in F^{m+n+1}$ satisfying $x_{\left[  0,k\right)  }=a$ and
$\operatorname*{rank}\left(  H_{m,n}\left(  x\right)  \right)  \leq m$ is
$q^{2m-k}$.
\end{lemma}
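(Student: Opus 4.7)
The proof plan is to combine the two main ingredients already built in Section \ref{sec.main-lem}, namely Proposition \ref{prop.elkies-prop2} and Lemma \ref{lem.sumlast}, in a completely formal way. Let $N$ denote the number we wish to compute, i.e.,
\[
N = \sum_{\substack{x \in F^{m+n+1};\\ x_{[0,k)} = a}} \left[\operatorname{rank}\left(H_{m,n}(x)\right) \leq m\right],
\]
where I have used Proposition \ref{prop.iverson.rollcall} to write the count as a sum of Iverson brackets.

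The first step is to check that the hypotheses of Proposition \ref{prop.elkies-prop2} apply, which is immediate from $m \leq n+1$. Multiplying the identity from that proposition by $(q-1)$ and summing over all $x \in F^{m+n+1}$ with $x_{[0,k)} = a$, I obtain
\begin{align*}
(q-1) N
&= \sum_{\substack{x \in F^{m+n+1};\\ x_{[0,k)} = a}} \ \sum_{\substack{v \in F^{1 \times (m+1)};\\ v \neq 0}} \left[v\ H_{m,n}(x) = 0\right] \\
&\qquad - q \sum_{\substack{x \in F^{m+n+1};\\ x_{[0,k)} = a}} \ \sum_{\substack{v \in F^{1 \times m};\\ v \neq 0}} \left[v\ H_{m-1,n+1}(x) = 0\right].
\end{align*}
(Interchanging the finite sums over $x$ and over $v$ is allowed; the outer factor of $(q-1)$ and the inner factor of $q$ are scalars and commute with the $x$-sum.)

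The second step is to apply Lemma \ref{lem.sumlast}, whose hypotheses $k \leq m$ and $k \leq n+1$ both follow from $k \leq m \leq n+1$. This lemma tells me that the right-hand side above equals exactly $(q-1) q^{2m-k}$. Since $q \geq 2$, the factor $q-1$ is nonzero, so dividing yields $N = q^{2m-k}$, as claimed.

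In short, there is really no obstacle: the hard combinatorial work has been absorbed into Proposition \ref{prop.elkies-prop2} (which turns the global rank inequality into a pair of local linear-algebraic conditions on row vectors $v$) and into Lemma \ref{lem.sumlast} (which evaluates the resulting double sum by fixing $v$ first and counting $x$ via Lemmas \ref{lem.lastnon0} and \ref{lem.last0}). The only thing to verify is the matching of hypotheses, which is routine. If anything were to go wrong, it would be at the boundary case $k = m = n+1$, but the hypothesis $k \leq m \leq n+1$ is set up precisely so that Proposition \ref{prop.elkies-prop2} and Lemma \ref{lem.sumlast} both apply without edge-case trouble.
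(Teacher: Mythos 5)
Your proposal is correct and follows exactly the same route as the paper's own proof: express the count as a sum of Iverson brackets, multiply by $q-1$, rewrite each summand via Proposition \ref{prop.elkies-prop2}, evaluate the resulting double sums by Lemma \ref{lem.sumlast}, and cancel the nonzero factor $q-1$. The hypothesis checks you perform ($m\leq n+1$ for Proposition \ref{prop.elkies-prop2}, and $k\leq m$, $k\leq n+1$ for Lemma \ref{lem.sumlast}) are precisely the ones implicitly used in the paper.
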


\begin{proof}
[Proof of Lemma \ref{lem.hankel.case-r=m}.]Write the $k$-tuple $a$ in the form
$a=\left(  a_{0},a_{1},\ldots,a_{k-1}\right)  $.

We have%
\begin{align*}
&  \left(  q-1\right)  \cdot\sum_{\substack{x\in F^{m+n+1};\\x_{\left[
0,k\right)  }=a}}\left[  \operatorname*{rank}\left(  H_{m,n}\left(  x\right)
\right)  \leq m\right] \\
&  =\sum_{\substack{x\in F^{m+n+1};\\x_{\left[  0,k\right)  }=a}%
}\ \ \ \ \underbrace{\left(  q-1\right)  \cdot\left[  \operatorname*{rank}%
\left(  H_{m,n}\left(  x\right)  \right)  \leq m\right]  }_{\substack{=\sum
_{\substack{v\in F^{1\times\left(  m+1\right)  };\\v\neq0}}\left[
v\ H_{m,n}\left(  x\right)  =0\right]  -q\sum_{\substack{v\in F^{1\times
m};\\v\neq0}}\left[  v\ H_{m-1,n+1}\left(  x\right)  =0\right]  \\\text{(by
Proposition \ref{prop.elkies-prop2})}}}\\
&  =\sum_{\substack{x\in F^{m+n+1};\\x_{\left[  0,k\right)  }=a}}\left(
\sum_{\substack{v\in F^{1\times\left(  m+1\right)  };\\v\neq0}}\left[
v\ H_{m,n}\left(  x\right)  =0\right]  -q\sum_{\substack{v\in F^{1\times
m};\\v\neq0}}\left[  v\ H_{m-1,n+1}\left(  x\right)  =0\right]  \right) \\
&  =\sum_{\substack{x\in F^{m+n+1};\\x_{\left[  0,k\right)  }=a}%
}\ \ \sum_{\substack{v\in F^{1\times\left(  m+1\right)  };\\v\neq0}}\left[
v\ H_{m,n}\left(  x\right)  =0\right]  -q\sum_{\substack{x\in F^{m+n+1}%
;\\x_{\left[  0,k\right)  }=a}}\ \ \sum_{\substack{v\in F^{1\times m}%
;\\v\neq0}}\left[  v\ H_{m-1,n+1}\left(  x\right)  =0\right] \\
&  =\left(  q-1\right)  q^{2m-k}\ \ \ \ \ \ \ \ \ \ \left(  \text{by Lemma
\ref{lem.sumlast}}\right)  .
\end{align*}
Cancelling $q-1$ from this equality (since $q-1\neq0$), we obtain
\[
\sum_{\substack{x\in F^{m+n+1};\\x_{\left[  0,k\right)  }=a}}\left[
\operatorname*{rank}\left(  H_{m,n}\left(  x\right)  \right)  \leq m\right]
=q^{2m-k}.
\]
But the left hand side of this equality is the number of $\left(
m+n+1\right)  $-tuples $x\in F^{m+n+1}$ satisfying $x_{\left[  0,k\right)
}=a$ and $\operatorname*{rank}\left(  H_{m,n}\left(  x\right)  \right)  \leq
m$ (because of Proposition \ref{prop.iverson.rollcall}). Thus, this number is
$q^{2m-k}$. This proves Lemma \ref{lem.hankel.case-r=m}.
\end{proof}

\section{\label{sec.proofs-main}Proofs of the main results}

We can now prove the results from Section \ref{sec.results} in their full generality.

\begin{proof}
[Proof of Theorem \ref{thm.hankel.mn<=rk}.]Let $s=m+n-r$. Then, $r+s=m+n$.
Also, $r\leq s$ (since $\underbrace{s}_{=m+n-r}-r=m+n-\underbrace{r}_{\leq
m}-\underbrace{r}_{\leq n}\geq m+n-m-n=0$), so that $r\leq s\leq s+1$ and thus
$k\leq r\leq s+1$.

Lemma \ref{lem.hank-ranks.6} (applied to $u=m+n$) yields the logical
equivalence
\[
\left(  \operatorname*{rank}\left(  H_{m,n}\left(  x\right)  \right)  \leq
r\right)  \ \Longleftrightarrow\ \left(  \operatorname*{rank}\left(
H_{r,s}\left(  x\right)  \right)  \leq r\right)
\]
for any $\left(  m+n+1\right)  $-tuple $x\in F^{m+n+1}$. Thus, \footnote{The
symbol \textquotedblleft\#\textquotedblright\ means \textquotedblleft
number\textquotedblright.}
\begin{align*}
&  \left(  \text{\# of all }\left(  m+n+1\right)  \text{-tuples }x\in
F^{m+n+1}\text{ satisfying }x_{\left[  0,k\right)  }=a\right. \\
&  \ \ \ \ \ \ \ \ \ \ \left.  \text{and }\operatorname*{rank}\left(
H_{m,n}\left(  x\right)  \right)  \leq r\vphantom{F^{m+n+1}}\right) \\
&  =\left(  \text{\# of all }\left(  m+n+1\right)  \text{-tuples }x\in
F^{m+n+1}\text{ satisfying }x_{\left[  0,k\right)  }=a\right. \\
&  \ \ \ \ \ \ \ \ \ \ \left.  \text{and }\operatorname*{rank}\left(
H_{r,s}\left(  x\right)  \right)  \leq r\vphantom{F^{m+n+1}}\right) \\
&  =\left(  \text{\# of all }\left(  r+s+1\right)  \text{-tuples }x\in
F^{r+s+1}\text{ satisfying }x_{\left[  0,k\right)  }=a\right. \\
&  \ \ \ \ \ \ \ \ \ \ \left.  \text{and }\operatorname*{rank}\left(
H_{r,s}\left(  x\right)  \right)  \leq r\vphantom{F^{m+n+1}}\right)
\ \ \ \ \ \ \ \ \ \ \left(  \text{since }m+n=r+s\right) \\
&  =q^{2r-k}\ \ \ \ \ \ \ \ \ \ \left(  \text{by Lemma
\ref{lem.hankel.case-r=m}, applied to }r\text{ and }s\text{ instead of
}m\text{ and }n\right)  .
\end{align*}
This proves Theorem \ref{thm.hankel.mn<=rk}.
\end{proof}

\begin{proof}
[Proof of Theorem \ref{thm.hankel.mn<=r}.]Let $a$ be the $0$-tuple $\left(
{}\right)  \in F^{0}$. Thus, Theorem \ref{thm.hankel.mn<=rk} (applied to
$k=0$) yields that the number of $\left(  m+n+1\right)  $-tuples $x\in
F^{m+n+1}$ satisfying $x_{\left[  0,0\right)  }=a$ and $\operatorname*{rank}%
\left(  H_{m,n}\left(  x\right)  \right)  \leq r$ is $q^{2r-0}$. We can remove
the \textquotedblleft$x_{\left[  0,0\right)  }=a$\textquotedblright\ condition
from the previous sentence (since \textbf{every} $\left(  m+n+1\right)
$-tuple $x\in F^{m+n+1}$ satisfies $x_{\left[  0,0\right)  }=\left(
{}\right)  =a$), and thus obtain the following: The number of $\left(
m+n+1\right)  $-tuples $x\in F^{m+n+1}$ satisfying $\operatorname*{rank}%
\left(  H_{m,n}\left(  x\right)  \right)  \leq r$ is $q^{2r-0}$. But this is
precisely the claim of Theorem \ref{thm.hankel.mn<=r} (since $2r-0=2r$). Thus,
Theorem \ref{thm.hankel.mn<=r} is proved.
\end{proof}

\begin{proof}
[Proof of Corollary \ref{cor.hankel.mn=r}.]We need to prove the following four
claims: \footnote{The symbol \textquotedblleft\#\textquotedblright\ means
\textquotedblleft number\textquotedblright.}

\begin{statement}
\textit{Claim 1:} If $r=0$, then the \# of $\left(  m+n+1\right)  $-tuples
$x\in F^{m+n+1}$ satisfying $\operatorname*{rank}\left(  H_{m,n}\left(
x\right)  \right)  =r$ is $1$.
\end{statement}

\begin{statement}
\textit{Claim 2:} If $0<r\leq m$, then the \# of $\left(  m+n+1\right)
$-tuples $x\in F^{m+n+1}$ satisfying $\operatorname*{rank}\left(
H_{m,n}\left(  x\right)  \right)  =r$ is $q^{2r-2}\left(  q^{2}-1\right)  $.
\end{statement}

\begin{statement}
\textit{Claim 3:} If $r=m+1$, then the \# of $\left(  m+n+1\right)  $-tuples
$x\in F^{m+n+1}$ satisfying $\operatorname*{rank}\left(  H_{m,n}\left(
x\right)  \right)  =r$ is $q^{2r-2}\left(  q^{n-m+1}-1\right)  $.
\end{statement}

\begin{statement}
\textit{Claim 4:} If $r>m+1$, then the \# of $\left(  m+n+1\right)  $-tuples
$x\in F^{m+n+1}$ satisfying $\operatorname*{rank}\left(  H_{m,n}\left(
x\right)  \right)  =r$ is $0$.
\end{statement}

[\textit{Proof of Claim 1:} We need to show that the \# of $\left(
m+n+1\right)  $-tuples $x\in F^{m+n+1}$ satisfying $\operatorname*{rank}%
\left(  H_{m,n}\left(  x\right)  \right)  =0$ is $1$. In other words, we need
to show that there is exactly one $\left(  m+n+1\right)  $-tuple $x\in
F^{m+n+1}$ satisfying $\operatorname*{rank}\left(  H_{m,n}\left(  x\right)
\right)  =0$. But this is rather simple: The $\left(  m+n+1\right)  $-tuple
$\left(  0,0,\ldots,0\right)  \in F^{m+n+1}$ does satisfy
$\operatorname*{rank}\left(  H_{m,n}\left(  x\right)  \right)  =0$ (since
$H_{m,n}\left(  x\right)  $ is the zero matrix when $x$ is this $\left(
m+n+1\right)  $-tuple), and no other $\left(  m+n+1\right)  $-tuple does this
(because if $x\in F^{m+n+1}$ is not $\left(  0,0,\ldots,0\right)  $, then the
matrix $H_{m,n}\left(  x\right)  $ has at least one nonzero entry, and
therefore its rank cannot be $0$). Thus, Claim 1 is proved.]

[\textit{Proof of Claim 2:} Assume that $0<r\leq m$. Thus, $r$ and $r-1$ are
elements of $\mathbb{N}$ and satisfy $r\leq m\leq n$ and $r-1\leq r\leq m\leq
n$. Hence:

\begin{itemize}
\item Theorem \ref{thm.hankel.mn<=r} yields that%
\begin{align*}
&  \left(  \text{\# of }\left(  m+n+1\right)  \text{-tuples }x\in
F^{m+n+1}\text{ satisfying }\operatorname*{rank}\left(  H_{m,n}\left(
x\right)  \right)  \leq r\right) \\
&  =q^{2r}.
\end{align*}

\item Theorem \ref{thm.hankel.mn<=r} (applied to $r-1$ instead of $r$) yields
that%
\begin{align*}
&  \left(  \text{\# of }\left(  m+n+1\right)  \text{-tuples }x\in
F^{m+n+1}\text{ satisfying }\operatorname*{rank}\left(  H_{m,n}\left(
x\right)  \right)  \leq r-1\right) \\
&  =q^{2\left(  r-1\right)  }.
\end{align*}

\end{itemize}

However, a matrix $A$ satisfies $\operatorname*{rank}A=r$ if and only if it
satisfies $\operatorname*{rank}A\leq r$ but not $\operatorname*{rank}A\leq
r-1$. Hence,%
\begin{align*}
&  \left(  \text{\# of }\left(  m+n+1\right)  \text{-tuples }x\in
F^{m+n+1}\text{ satisfying }\operatorname*{rank}\left(  H_{m,n}\left(
x\right)  \right)  =r\right) \\
&  =\underbrace{\left(  \text{\# of }\left(  m+n+1\right)  \text{-tuples }x\in
F^{m+n+1}\text{ satisfying }\operatorname*{rank}\left(  H_{m,n}\left(
x\right)  \right)  \leq r\right)  }_{=q^{2r}}\\
&  \ \ \ \ \ \ \ \ \ \ -\underbrace{\left(  \text{\# of }\left(  m+n+1\right)
\text{-tuples }x\in F^{m+n+1}\text{ satisfying }\operatorname*{rank}\left(
H_{m,n}\left(  x\right)  \right)  \leq r-1\right)  }_{=q^{2\left(  r-1\right)
}}\\
&  =q^{2r}-q^{2\left(  r-1\right)  }=q^{2r-2}\left(  q^{2}-1\right)  .
\end{align*}
This proves Claim 2.]

[\textit{Proof of Claim 3:} Assume that $r=m+1$. Thus, $2r=2\left(
m+1\right)  =2m+2$, so that $2m=2r-2$. The matrix $H_{m,n}\left(  x\right)  $
(for any given $x$) is an $\left(  m+1\right)  \times\left(  n+1\right)
$-matrix; thus, its rank is always $\leq m+1$. Hence, it has rank $m+1$ if and
only if it does not have rank $\leq m$. Thus,%
\begin{align*}
&  \left(  \text{\# of }\left(  m+n+1\right)  \text{-tuples }x\in
F^{m+n+1}\text{ satisfying }\operatorname*{rank}\left(  H_{m,n}\left(
x\right)  \right)  =m+1\right) \\
&  =\underbrace{\left(  \text{\# of all }\left(  m+n+1\right)  \text{-tuples
}x\in F^{m+n+1}\right)  }_{\substack{=q^{m+n+1}\\\text{(since }\left\vert
F\right\vert =q\text{)}}}\\
&  \ \ \ \ \ \ \ \ \ \ -\underbrace{\left(  \text{\# of }\left(  m+n+1\right)
\text{-tuples }x\in F^{m+n+1}\text{ satisfying }\operatorname*{rank}\left(
H_{m,n}\left(  x\right)  \right)  \leq m\right)  }_{\substack{=q^{2m}%
\\\text{(by Theorem \ref{thm.hankel.mn<=r}, applied to }m\text{ instead of
}r\text{)}}}\\
&  =q^{m+n+1}-q^{2m}=q^{2m}\left(  q^{n-m+1}-1\right)  =q^{2r-2}\left(
q^{n-m+1}-1\right)
\end{align*}
(since $2m=2r-2$). But this is precisely the claim of Claim 3 (since $r=m+1$).
Thus, Claim 3 is proven.]

[\textit{Proof of Claim 4:} Assume that $r>m+1$. The matrix $H_{m,n}\left(
x\right)  $ (for any given $x$) is an $\left(  m+1\right)  \times\left(
n+1\right)  $-matrix; thus, its rank is always $\leq m+1$. Hence, its rank is
never $r$ (because $r>m+1$). Thus,
\[
\left(  \text{\# of }\left(  m+n+1\right)  \text{-tuples }x\in F^{m+n+1}\text{
satisfying }\operatorname*{rank}\left(  H_{m,n}\left(  x\right)  \right)
=r\right)  =0.
\]
This proves Claim 4.]

Having proved all four claims, we thus have completed the proof of Corollary
\ref{cor.hankel.mn=r}.
\end{proof}

\begin{proof}
[Proof of Corollary \ref{cor.hankel.detk}.]If $x\in F^{2n+1}$ is any $\left(
2n+1\right)  $-tuple, then the condition \textquotedblleft$\det\left(
H_{n,n}\left(  x\right)  \right)  =0$\textquotedblright\ is equivalent to
\textquotedblleft$\operatorname*{rank}\left(  H_{n,n}\left(  x\right)
\right)  \leq n$\textquotedblright\ (since $H_{n,n}\left(  x\right)  $ is an
$\left(  n+1\right)  \times\left(  n+1\right)  $-matrix, and thus its
determinant vanishes if and only if its rank is $\leq n$). Hence, the number
of $\left(  2n+1\right)  $-tuples $x\in F^{2n+1}$ satisfying $x_{\left[
0,k\right)  }=a$ and $\det\left(  H_{n,n}\left(  x\right)  \right)  =0$ is
precisely the number of $\left(  2n+1\right)  $-tuples $x\in F^{2n+1}$
satisfying $x_{\left[  0,k\right)  }=a$ and $\operatorname*{rank}\left(
H_{n,n}\left(  x\right)  \right)  \leq n$. But Theorem \ref{thm.hankel.mn<=rk}
(applied to $m=n$ and $r=n$) shows that the latter number is $q^{2n-k}$. This
proves Corollary \ref{cor.hankel.detk}.
\end{proof}

\section{\label{sec.jt}Application to Jacobi--Trudi matrices}

Let us now discuss how \cite[Corollary 6.4]{Anzis-etc} follows from Corollary
\ref{cor.hankel.detk}. For the sake of simplicity, we shall first restate
\cite[Corollary 6.4]{Anzis-etc} in a self-contained form that does not rely on
the concepts of symmetric functions:

\begin{corollary}
\label{cor.anzis}Assume that $F$ is finite. Let $q=\left\vert F\right\vert $.
Let $u,v\in\mathbb{N}$. For each $\left(  u+v-1\right)  $-tuple $y=\left(
y_{1},y_{2},\ldots,y_{u+v-1}\right)  \in F^{u+v-1}$, we define the matrix%
\[
J_{u,v}\left(  y\right)  :=\left(  y_{u-i+j}\right)  _{1\leq i\leq v,\ 1\leq
j\leq v}\in F^{v\times v},
\]
where we set $y_{0}:=1$ and $y_{k}:=0$ for all $k<0$.

Then, the number of all $\left(  u+v-1\right)  $-tuples $y\in F^{u+v-1}$
satisfying $\det\left(  J_{u,v}\left(  y\right)  \right)  =0$ is $q^{u+v-2}$.
\end{corollary}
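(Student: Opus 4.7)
The plan is to recognize $J_{u,v}(y)$ as a Hankel matrix (after reversing the rows) with some ``boundary'' entries forced by the conventions $y_0 = 1$ and $y_k = 0$ for $k < 0$, and then invoke either Theorem~\ref{thm.hankel.mn<=r} or Corollary~\ref{cor.hankel.detk} depending on whether $u \geq v$ or $u \leq v$. Concretely, I would first reverse the rows of $J_{u,v}(y)$: this does not change whether the determinant vanishes, and the resulting matrix has entry $y_{u-v+1+i+j}$ at position $(i,j)$ for $0 \leq i, j \leq v-1$. Setting $x_k := y_{u-v+1+k}$ for $k = 0, 1, \ldots, 2v-2$, the reversed matrix becomes exactly the Hankel matrix $H_{v-1, v-1}(x)$, so the condition $\det(J_{u,v}(y)) = 0$ becomes $\operatorname{rank}(H_{v-1, v-1}(x)) \leq v-1$.

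If $u \geq v$, then all indices $u-v+1, \ldots, u+v-1$ are $\geq 1$, so the $2v-1$ entries of $x$ are in bijection with freely chosen entries of $y$, while the remaining $u-v$ entries $y_1, \ldots, y_{u-v}$ of $y$ do not appear in the matrix and contribute a factor $q^{u-v}$. Applying Theorem~\ref{thm.hankel.mn<=r} with $m = n = r = v-1$ to count admissible $x$ then yields the total $q^{u-v} \cdot q^{2(v-1)} = q^{u+v-2}$.

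If $1 \leq u \leq v$, the conventions force $x_0 = x_1 = \cdots = x_{v-u-2} = 0$ and $x_{v-u-1} = 1$, while $x_{v-u}, \ldots, x_{2v-2}$ are in bijection with the free entries $y_1, \ldots, y_{u+v-1}$ of $y$. Invoking Corollary~\ref{cor.hankel.detk} with $n = v-1$, $k = v-u$, and the forced prefix $a = (0, \ldots, 0, 1) \in F^{v-u}$ then gives $q^{2(v-1) - (v-u)} = q^{u+v-2}$. The edge case $u = 0$ can be handled by direct inspection: then $J_{0,v}(y)$ is upper-triangular with $1$'s on the diagonal, so $\det = 1 \neq 0$ always; this indicates that the stated formula implicitly requires $u \geq 1$.

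The main obstacle is the careful bookkeeping: one must verify that the forced boundary prefix of $x$ coming from the conventions on $y$ matches exactly the prefix $a$ fed into Corollary~\ref{cor.hankel.detk}, and that the indexing is consistent in both cases $u \geq v$ and $u \leq v$. Everything else is routine linear algebra and arithmetic.
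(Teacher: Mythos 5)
Your proposal is correct and follows essentially the same route as the paper's proof: reverse the rows of $J_{u,v}\left(y\right)$ to obtain a Hankel matrix $H_{v-1,v-1}\left(x\right)$ whose forced prefix $\left(0,\ldots,0,1\right)$ of length $v-u$ feeds into Corollary \ref{cor.hankel.detk}. You are in fact somewhat more thorough than the paper's sketch (which only works out the case $u<v$ on an example): you treat $u\geq v$ separately via the extra factor $q^{u-v}$ for the entries $y_{1},\ldots,y_{u-v}$ that do not appear in the matrix, and you correctly observe that the statement as literally written fails for $u=0$ (the same degeneracy occurs for $v=0$), so the corollary implicitly assumes $u,v\geq1$.
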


\begin{example}
\textbf{(a)} If $u=1$ and $v=3$, then each $3$-tuple $y=\left(  y_{1}%
,y_{2},y_{3}\right)  \in F^{3}$ satisfies%
\begin{align*}
J_{u,v}\left(  y\right)   &  =J_{1,3}\left(  y\right)  =\left(  y_{1-i+j}%
\right)  _{1\leq i\leq3,\ 1\leq j\leq3}=\left(
\begin{array}
[c]{ccc}%
y_{1} & y_{2} & y_{3}\\
y_{0} & y_{1} & y_{2}\\
y_{-1} & y_{0} & y_{1}%
\end{array}
\right) \\
&  =\left(
\begin{array}
[c]{ccc}%
y_{1} & y_{2} & y_{3}\\
1 & y_{1} & y_{2}\\
0 & 1 & y_{1}%
\end{array}
\right)  \ \ \ \ \ \ \ \ \ \ \left(  \text{since }y_{0}=1\text{ and }%
y_{-1}=0\right)
\end{align*}
and thus $\det\left(  J_{u,v}\left(  y\right)  \right)  =y_{3}+y_{1}%
^{3}-2y_{1}y_{2}$.

\textbf{(b)} If $u=4$ and $v=3$, then each $6$-tuple $y=\left(  y_{1}%
,y_{2},\ldots,y_{6}\right)  \in F^{6}$ satisfies%
\[
J_{u,v}\left(  y\right)  =J_{4,3}\left(  y\right)  =\left(  y_{4-i+j}\right)
_{1\leq i\leq3,\ 1\leq j\leq3}=\left(
\begin{array}
[c]{ccc}%
y_{4} & y_{5} & y_{6}\\
y_{3} & y_{4} & y_{5}\\
y_{2} & y_{3} & y_{4}%
\end{array}
\right)
\]
and thus $\det\left(  J_{u,v}\left(  y\right)  \right)  =y_{6}y_{3}^{2}%
-2y_{3}y_{4}y_{5}+y_{4}^{3}-y_{2}y_{6}y_{4}+y_{2}y_{5}^{2}$.
\end{example}

Why is Corollary \ref{cor.anzis} equivalent to \cite[Corollary 6.4]%
{Anzis-etc}? In fact, Corollary \ref{cor.anzis} can be restated in
probabilistic terms; then it says that a uniformly random $\left(
u+v-1\right)  $-tuple $y\in F^{u+v-1}$ satisfies $\det\left(  J_{u,v}\left(
y\right)  \right)  =0$ with a probability of $\dfrac{q^{u+v-2}}{q^{u+v-1}%
}=\dfrac{1}{q}$. However, the matrix $J_{u,v}\left(  y\right)  $ in Corollary
\ref{cor.anzis} is precisely the Jacobi--Trudi matrix\footnote{We are using
the terminology of \cite{Anzis-etc} here.} corresponding to the
rectangle-shaped partition $\left(  u^{v}\right)  $, except that the entries
of $y$ have been substituted for the complete homogeneous symmetric functions
$h_{1},h_{2},\ldots,h_{u+v-1}$. The determinant $\det\left(  J_{u,v}\left(
y\right)  \right)  $ therefore is the image of the Schur function $s_{\left(
u^{v}\right)  }$ under this substitution. Thus, Corollary \ref{cor.anzis} says
that when a uniformly random $\left(  u+v-1\right)  $-tuple of elements of $F$
is substituted for $\left(  h_{1},h_{2},\ldots,h_{u+v-1}\right)  $, the Schur
function $s_{\left(  u^{v}\right)  }$ becomes $0$ with a probability of
$\dfrac{1}{q}$. This is precisely the claim of \cite[Corollary 6.4]{Anzis-etc}.

We shall now sketch (on an example) how Corollary \ref{cor.anzis} can be
derived from our Corollary \ref{cor.hankel.detk}:

\begin{proof}
[Proof of Corollary \ref{cor.anzis} (sketched).]For a sufficiently
representative example, we pick the case when $u=2$ and $v=5$; the reader will
not find any difficulty in generalizing our reasoning to the general case.

Thus, we must show that the number of all $6$-tuples $y\in F^{6}$ satisfying
$\det\left(  J_{2,5}\left(  y\right)  \right)  =0$ is $q^{5}$. Let $y=\left(
y_{1},y_{2},\ldots,y_{6}\right)  \in F^{6}$ be any $6$-tuple. Then,%
\[
J_{2,5}\left(  y\right)  =\left(
\begin{array}
[c]{ccccc}%
y_{2} & y_{3} & y_{4} & y_{5} & y_{6}\\
y_{1} & y_{2} & y_{3} & y_{4} & y_{5}\\
y_{0} & y_{1} & y_{2} & y_{3} & y_{4}\\
y_{-1} & y_{0} & y_{1} & y_{2} & y_{3}\\
y_{-2} & y_{-1} & y_{0} & y_{1} & y_{2}%
\end{array}
\right)  =\left(
\begin{array}
[c]{ccccc}%
y_{2} & y_{3} & y_{4} & y_{5} & y_{6}\\
y_{1} & y_{2} & y_{3} & y_{4} & y_{5}\\
1 & y_{1} & y_{2} & y_{3} & y_{4}\\
0 & 1 & y_{1} & y_{2} & y_{3}\\
0 & 0 & 1 & y_{1} & y_{2}%
\end{array}
\right)
\]
(since $y_{0}=1$ and $y_{-1}=0$ and $y_{-2}=0$). If we turn the matrix
$J_{2,5}\left(  y\right)  $ upside down (i.e., we reverse the order of its
rows), then we obtain the matrix%
\[
\left(
\begin{array}
[c]{ccccc}%
0 & 0 & 1 & y_{1} & y_{2}\\
0 & 1 & y_{1} & y_{2} & y_{3}\\
1 & y_{1} & y_{2} & y_{3} & y_{4}\\
y_{1} & y_{2} & y_{3} & y_{4} & y_{5}\\
y_{2} & y_{3} & y_{4} & y_{5} & y_{6}%
\end{array}
\right)  ,
\]
which is precisely the Hankel matrix $H_{4,4}\left(  x\right)  $ for the
$9$-tuple%
\[
x=\left(  0,0,1,y_{1},y_{2},y_{3},y_{4},y_{5},y_{6}\right)  .
\]
Hence, this $9$-tuple $x$ satisfies $\det\left(  H_{4,4}\left(  x\right)
\right)  =\pm\det\left(  J_{2,5}\left(  y\right)  \right)  $ (since the
determinant of a matrix is multiplied by $\pm1$ when the rows of the matrix
are permuted). Therefore, the condition \textquotedblleft$\det\left(
J_{2,5}\left(  y\right)  \right)  =0$\textquotedblright\ is equivalent to the
condition \textquotedblleft$\det\left(  H_{4,4}\left(  x\right)  \right)
=0$\textquotedblright\ for this $9$-tuple $x$. Hence, the number of all
$6$-tuples $y\in F^{6}$ satisfying $\det\left(  J_{2,5}\left(  y\right)
\right)  =0$ is precisely the number of all $9$-tuples $x\in F^{9}$ that start
with the entries $0,0,1$ and satisfy $\det\left(  H_{4,4}\left(  x\right)
\right)  =0$. In other words, it is precisely the number of all $9$-tuples
$x\in F^{9}$ satisfying $x_{\left[  0,3\right)  }=\left(  0,0,1\right)  $ and
$\det\left(  H_{4,4}\left(  x\right)  \right)  =0$. However, Corollary
\ref{cor.hankel.detk} (applied to $k=3$ and $n=4$ and $a=\left(  0,0,1\right)
$) shows that the latter number is $q^{2\cdot4-3}=q^{5}$. This is precisely
what we wanted to show. Thus, Corollary \ref{cor.anzis} is proved.
\end{proof}

\end{document}